\documentclass[]{article}
\usepackage{latexsym,amssymb,amsmath,amsthm}
\usepackage[dvips]{graphics}

\newtheorem{thm}{Theorem}

\newtheorem{lemma}[thm]{Lemma}
\newtheorem{propo}[thm]{Proposition}
\theoremstyle{definition}

\newtheorem{remark}{Remark}

\setlength{\textheight}{20.5cm}
\setlength{\textwidth}{12.5cm}

\newcommand{\C}{\mathbb{C}}
\newcommand{\R}{\mathbb{R}}
\newcommand{\Z}{\mathbb{Z}}
\newcommand{\T}{\mathbb{T}}
\newcommand{\floor}[1]{\left\lfloor #1 \right\rfloor}
\newcommand{\comment}[1]{}
\newcommand{\abs}[1]{\left\lvert #1 \right\rvert}
\DeclareMathOperator{\Var}{Var}

\title{Analytic continuation of Dirichlet series with almost periodic coefficients}
\author{Oliver Knill and John Lesieutre 
}
\date{November 9, 2008}

\begin{document}
\maketitle
\begin{abstract}
  We consider Dirichlet series
  $\zeta_{g,\alpha}(s)=\sum_{n=1}^\infty g(n\alpha) e^{-\lambda_n s}$ 
  for fixed irrational $\alpha$ and periodic functions $g$. We
  demonstrate that for Diophantine $\alpha$ and smooth $g$, the
  line ${\rm Re}(s)=0$ is a natural boundary in the Taylor series
  case $\lambda_n=n$, so that the unit circle is the maximal domain of
  holomorphy for the almost periodic Taylor series
  $\sum_{n=1}^{\infty} g(n\alpha) z^n$. We prove that a
  Dirichlet series $\zeta(s) = \sum_{n=1}^{\infty} g(n \alpha)/n^s$ has
  an abscissa of convergence $\sigma_0 = 0$ if $g$ is odd and real analytic
  and $\alpha$ is Diophantine. We show that if $g$ is odd and has bounded 
  variation and
  $\alpha$ is of bounded Diophantine type $r$, the abscissa of convergence
  is smaller or equal than $1-1/r$. Using a polylogarithm
  expansion, we prove that if $g$ is odd and real analytic and $\alpha$ is
  Diophantine, then the Dirichlet series $\zeta(s)$ 
  has an analytic continuation to the entire complex plane. 
\end{abstract}

\vspace{8mm}
{\bf AMS classification:  11M99, 30D99, 33E20} 

\section{Introduction} 
Let $g : \R \to \C$ be a piecewise continuous $1$-periodic $L^2$ function with
Fourier expansion $g(x) = \sum_{k=-\infty}^\infty c_k e^{2 \pi i kx}$. 
Define the $\zeta$ function 
$$ \zeta_{g,\alpha}(s) = \sum_{n=1}^{\infty} \frac{g(n \alpha)}{n^s} \; . $$
For irrational $\alpha$, we call this a
{\bf Dirichlet series with almost periodic coefficients}. 
An example is the Clausen function, where 
$g(x) = \sin(2 \pi x)$ or the poly-logarithm, where $g(x) = \exp(2 \pi i
x)$. Another example arises with $g(x)=x-\floor{x+1/2}$, the signed
distance from $x$ to the nearest integer. Obviously for ${\rm Re}(s)>1$, 
such a Dirichlet series converges uniformly to an analytic limit.

The case that \(\alpha\) is rational is less interesting, as the
following computation illustrates.

{\it 
For periodic $\alpha=p/q$ and any odd function $g$, the zeta function
has an abscissa of convergence $0$ and allows an analytic continuation 
to the entire plane.
}
\begin{proof}
Write
\begin{eqnarray*}
\sum_{n=1}^\infty \frac{g(np/q)}{n^s} 
 &=& \sum_{\ell=1}^q \sum_{n=0}^\infty \frac{g((nq+\ell)(p/q))}{(nq+\ell)^s}  \\
 &=& \frac{1}{q^s} \sum_{\ell=1}^q \sum_{n=0}^\infty 
      \frac{g(n+\ell p/q)}{(n+\ell/q)^s} 
  = \frac{1}{q^s} \sum_{\ell=1}^q g(\ell/q) \zeta(s,\ell/q) \; , 
\end{eqnarray*}
where $\zeta(s,u)=\sum_{n=1}^{\infty} 1/(n+u)^s$ is the Hurwitz zeta function. So,
the periodic zeta function is a just finite sum of Hurwitz zeta
functions which individually allow a meromorphic continuation. Each
Hurwitz zeta function is analytic everywhere except at $1$, where it has
a pole of residue $1$: the series $\zeta(s,u) - 1/(s-1)$ has the 
abscissa of convergence $0$ and allows an analytic extension to the plane.
So, if $\sum_{n=1}^q g(n \alpha) = 0$, which 
is the case for example if $g$ is odd, then the periodic 
Dirichlet series has abscissa of convergence $0$ and 
admits an analytic continuation to the plane.
\end{proof}
When \(\alpha = 0\), the function \(\zeta_{g,\alpha}\) is merely a
multiple of the Riemann zeta function. \\

An other special case  $\lambda_n=n$ leads to Taylor series 
$f(z) = \sum_{n=1}^{\infty} a_n z^n$ 
with $z=e^{-s}$. Also here, the rational case is well understood: \\

{\it If $\alpha=p/q$ is rational, the function 
$f(z) = \sum_{n=1}^{\infty} a_n z^n$ has a meromorphic
extension to the entire plane. }

\begin{proof}
  If $\alpha=p/q$ define $h(z) = \sum_{n=1}^q g(n \alpha) z^n$.  Then
  $$  f(z) = h(z) (1+z^q+z^{2q} + \dots) = \frac{h(z)}{1-z^q} \; . $$
  The right hand side provides the meromorphic continuation of $f$. 
\end{proof}

We usually assume $\int g \; dm =0$ because we are interested in the 
growth of the random walk in the case $s=0$ and because if 
$\int g \; dm \neq 0$, the abscissa of convergence is in general $1$: 
for $f(x) = a_0+\sin(x)$ for example, where the Dirichlet series with 
$\lambda_n=\log(n)$ is a sum of the standard zeta function and the 
Clausen function. The abscissa of convergence is $\sigma_0=1$
except for $a_0=0$, where the abscissa drops to $0$.   \\

Zeta functions $\sum_{n=1}^{\infty} a_n e^{-\lambda_n s}$ can more
generally be considered for any dynamically generated sequence $a_n =
g(T^nx)$, where $T$ is a homeomorphism of a
compact topological space $X$ and $g$ is a continuous function and
$\lambda_n$ grows monotonically to $\infty$.  \\

Random Taylor series associated
with an ergodic transformation were considered in \cite{Halchin, Ionescu}. 
The topic has also been explored in a probabilistic setup,
where $a_n$ are independent random symmetric variables, in which case
the line ${\rm Re}(s) = \sigma_0$ is a natural boundary \cite{Kahane}.
Analytic continuation questions have also been studied for other
functions: if the coefficients are generated by finite automata, a
meromorphic continuation is possible \cite{AFP}. \\

In this paper we focus on Taylor series and ordinary Dirichlet series.
We restrict ourselves to the case, where the dynamical system is an
irrational rotation $x \mapsto x+\alpha$ on the circle. The minimality
and strict ergodicity of the system will often make the question
independent of the starting point $x \in X$ and allow to use
techniques of Fourier analysis and the Denjoy-Koksma inequality. We
are able to make statements
if $\alpha$ is Diophantine. \\

Dirichlet series can allow to get information on the growth of
the random walk $S_k = \sum_{n=1}^k g(T^n(x))$ for a $m$-measure
preserving dynamical system $T:X \to X$ if $\int_X g(x) \; dm(x) =0$.
Birkhoffs ergodic theorem assures $S_k = o(k)$. Similarly as the law of iterated
logarithm refines the law of large numbers in probability theory, 
and Denjoy-Koksma type results provide further estimates on the growth rate
in the case of irrational rotations, one can study the growth rate for more
general dynamical systems. 
The relation with algebra is as follows: if $S_k$ grows like $k^{\beta}$
then the abscissa of convergence of the ordinary Dirichlet series is
smaller or equal to $\beta$. In other words, establishing
bounds for the analyticity domain allows via Bohr's theorem to get
results on the abscissa of convergence which give upper bounds
of the growth rate. Adapting the $\lambda_n$ to the situation allows to 
explore different growth behavior. The algebraic concept of Dirichlet 
series helps so to understand a dynamical concept.  

\section{Almost periodic Taylor series}
A general Dirichlet series is of the form
$$ \sum_{n=1}^{\infty} a_n e^{-\lambda_n s}   \; . $$
For $\lambda_n=\log(n)$ this is an ordinary Dirichlet series, while
for $\lambda_n=n$, it is a Taylor series $\sum_n a_n z^n$ with
$z=e^{-s}$.  We primarily restrict our attention to these two cases.  \\

We begin by considering the easier problem of Taylor series with
almost periodic coefficients and examine the analytic continuation of
such functions beyond the unit circle. Given a non-constant periodic
function $g$, we can look at the
problem of whether the Taylor series
$$  f(z) = \sum_{n=1}^{\infty} g(n \alpha) z^n  $$
can be analytically continued beyond the unit circle.
Note that all these functions have radius of convergence $1$
because $\limsup_n |g(n \alpha)|^{1/n}=1$. 

We have already seen in the introduction that if 
$\alpha=p/q$ is rational, the function $f$ has a meromorphic
extension to the entire plane. There is an other case where
analytic continuation can be established immediately:

\begin{lemma}
\label{taylortrig}
If $g$ is a trigonometric polynomial and $\alpha$ is arbitrary, 
then $f$ has a meromorphic extension to the entire plane. 
\end{lemma}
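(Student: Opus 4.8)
The plan is to use the Fourier expansion of $g$ to reduce $f$ to a finite combination of elementary geometric series. Since $g$ is a trigonometric polynomial, we may write $g(x) = \sum_{k=-N}^{N} c_k e^{2\pi i k x}$ for some integer $N$ and constants $c_k$. Substituting $x = n\alpha$ gives $g(n\alpha) = \sum_{k=-N}^{N} c_k \beta_k^{\,n}$, where $\beta_k = e^{2\pi i k \alpha}$, so that each coefficient of the Taylor series is a finite linear combination of $n$-th powers of fixed unimodular numbers.

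Next I would work on the unit disc $|z|<1$ and interchange the finite sum over $k$ with the sum over $n$. This is legitimate because the sum over $k$ has only finitely many terms and, for fixed $z$ with $|z|<1$, each inner series $\sum_{n\ge 1}(\beta_k z)^n$ converges absolutely. Summing these geometric series yields
$$ f(z) = \sum_{k=-N}^{N} c_k \sum_{n=1}^{\infty} (\beta_k z)^n = \sum_{k=-N}^{N} \frac{c_k\, \beta_k z}{1 - \beta_k z}, $$
valid for $|z|<1$.

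The right-hand side is a rational function of $z$ whose poles are contained in the finite set $\{\beta_k^{-1} : -N \le k \le N\} = \{e^{-2\pi i k\alpha} : -N \le k \le N\}$, all of which lie on the unit circle. Hence this expression defines a meromorphic (in fact rational) function on all of $\C$ that agrees with $f$ on the unit disc, which is the desired analytic continuation.

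I do not anticipate any genuine obstacle: the only point needing a word of justification is the interchange of summation, and that is immediate from the finiteness of the Fourier expansion. It is worth remarking that the argument gives more than the statement claims — the continuation is rational, with poles located explicitly at the points $e^{-2\pi i k\alpha}$ on $|z|=1$ — and that this already suggests why a genuinely infinite Fourier expansion (smooth but non-polynomial $g$) should be expected to accumulate poles densely on the unit circle and thereby produce a natural boundary.
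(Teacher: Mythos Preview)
Your proof is correct and follows essentially the same approach as the paper: reduce by linearity (equivalently, by the finite Fourier expansion) to the single exponential $g(x)=e^{2\pi i k x}$, and sum the resulting geometric series to a rational function with a pole at $e^{-2\pi i k\alpha}$. Your write-up is simply a more detailed version of the paper's one-line argument, with the added observation that the continuation is in fact rational.
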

\begin{proof}
  Since $g(x) = \sum_{n=1}^k c_n e^{2 \pi i n x}$, it is enough to
  verify this for $g(x) = e^{2 \pi i n x}$, in which case the series
  sums to $f(z) = 1/(1-e^{2 \pi i n \alpha} z)$.
\end{proof}

On the other hand, if infinitely many of the Fourier coefficients for
\(g\) are nonzero, analytic continuation may not be possible.
\begin{propo}
  \label{propo1}
  Fix $r>1$. Assume $g$ is in $C^{t}$ for $t>2r+1$, 
  and that all Fourier coefficients $c_k$ of $g$
  are nonzero and that $\alpha$ is of Diophantine type $r$. Then the
  almost periodic Taylor series $f_{g,\alpha}(z) = \sum_{n=0}^\infty g(n\alpha) z^n$
  can not be continued beyond the unit circle.
\end{propo}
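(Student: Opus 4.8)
The plan is to show that every root of unity $e^{2\pi i m\beta}$ on the unit circle is a singular point of $f_{g,\alpha}$, which — since these points are dense on $|z|=1$ — forces the unit circle to be a natural boundary. The starting point is the Fourier expansion: writing $g(x)=\sum_k c_k e^{2\pi i kx}$ and substituting, one gets formally
$$ f_{g,\alpha}(z) = \sum_{n=0}^\infty \sum_{k} c_k e^{2\pi i kn\alpha} z^n = \sum_k c_k \sum_{n=0}^\infty \bigl(e^{2\pi i k\alpha}z\bigr)^n = \sum_k \frac{c_k}{1-e^{2\pi i k\alpha}z}\,. $$
The interchange of sums needs justification inside $|z|<1$, but since $|c_k|$ decays faster than any polynomial (as $g\in C^t$ with $t>2r+1$) and the geometric sums are uniformly bounded on compact subsets of the disk, this is routine. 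So $f_{g,\alpha}$ is represented inside the disk by a sum of simple rational functions with poles at the points $z_k = e^{-2\pi i k\alpha}$, and each such point lies on the unit circle. The Diophantine condition enters here: it guarantees these poles are genuinely spread out, and more importantly it will be used to control the tail of the series near a given boundary point.

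The key step is to fix a target point $\zeta = e^{2\pi i p\alpha}$ for some integer $p$ (these are dense on the circle by irrationality of $\alpha$) and show $f_{g,\alpha}$ cannot be analytically continued to a neighborhood of $\zeta$. I would split the series as
$$ f_{g,\alpha}(z) = \frac{c_{-p}}{1-e^{-2\pi i p\alpha}z} + \sum_{k\neq -p} \frac{c_k}{1-e^{2\pi i k\alpha}z}\,. $$
The first term has an honest pole at $z=\zeta$ with nonzero residue (here we use $c_{-p}\neq 0$). The main work is to show the remaining sum $h(z) = \sum_{k\neq -p} c_k/(1-e^{2\pi i k\alpha}z)$ extends holomorphically to a neighborhood of $\zeta$, so that the pole of the first term is not cancelled. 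For $z$ near $\zeta$, the denominator $1-e^{2\pi i k\alpha}z$ is comparable to $1 - e^{2\pi i(k+p)\alpha}$ (up to the small displacement $z-\zeta$), and by the Diophantine hypothesis $\|( k+p)\alpha\| \geq C/|k+p|^r$, so $|1-e^{2\pi i k\alpha}\zeta| \gtrsim |k+p|^{-r}$. Combined with the rapid decay $|c_k| = O(|k|^{-t})$ and $t > 2r+1 > r+1$, the series $\sum_{k\neq -p} |c_k|\,|k+p|^{r}$ converges, which gives locally uniform convergence of $h$ on a fixed neighborhood of $\zeta$ and hence holomorphy there. Therefore $f_{g,\alpha}$ has a genuine singularity at $\zeta$, and since such $\zeta$ are dense on $|z|=1$, the circle is a natural boundary.

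The main obstacle I expect is making the estimate on $h$ near $\zeta$ fully rigorous: one must control $|1-e^{2\pi i k\alpha}z|$ from below \emph{uniformly} for all $z$ in a fixed disk around $\zeta$ and all $k\neq -p$ simultaneously, not just at $z=\zeta$. Because the poles $z_k$ accumulate at every point of the circle, any fixed neighborhood of $\zeta$ will contain poles $z_k$ for large $|k|$; the saving grace is that those poles correspond to large $|k+p|$, where $|c_k|$ is already tiny, so one balances the radius of the neighborhood against $k$ — effectively summing only over $k$ with $z_k$ outside a disk of radius $\delta$ and showing the near-$\zeta$ poles contribute a convergent tail. This is precisely where the quantitative gap between $t$ and $2r+1$ is consumed: one needs roughly $t > r+1$ for convergence of $\sum |c_k||k+p|^r$ and an extra factor of $r$ to absorb the growth of residues when estimating the continued function, giving the stated threshold $t>2r+1$. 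I would also need the elementary fact that a sum of simple poles with summable residues, with the pole locations staying off a fixed region, defines a holomorphic function there — standard, but worth stating carefully.
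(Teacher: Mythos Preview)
Your overall strategy---expand $f_{g,\alpha}$ via the Fourier series of $g$ into the partial-fraction form $\sum_k c_k/(1-e^{2\pi i k\alpha}z)$, isolate the term with a pole at the target boundary point $\zeta=e^{2\pi i p\alpha}$, and control the remainder using the Diophantine lower bound on $|1-e^{2\pi i(k+p)\alpha}|$---is exactly the paper's approach. The difference lies in how you handle the remainder $h(z)=\sum_{k\neq -p} c_k/(1-e^{2\pi i k\alpha}z)$, and here there is a genuine gap.

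You assert that $h$ extends holomorphically to a neighborhood of $\zeta$, but this is false: the poles $z_k=e^{-2\pi i k\alpha}$ accumulate at \emph{every} point of the unit circle, so every neighborhood of $\zeta$ contains infinitely many of them, and $h$ cannot be holomorphic there. You recognize this obstacle in your last paragraph, but the proposed workaround (shrinking the neighborhood, balancing against the tail) does not lead to holomorphy on any open set containing $\zeta$. The decomposition $f=(\text{simple pole at }\zeta)+(\text{holomorphic near }\zeta)$ simply does not hold; $\zeta$ is not an isolated singularity.

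The paper's resolution is cleaner and avoids the issue entirely: rather than seek a two-dimensional neighborhood, it restricts to the radial segment $z=t\zeta$, $t\in(0,1)$. On this segment one has the uniform lower bound
\[
|1-e^{2\pi i k\alpha}\,t\zeta|=|1-t\,e^{2\pi i(k+p)\alpha}|\;\geq\;c\,\|(k+p)\alpha\|\;\gtrsim\;|k+p|^{-r}
\]
for all $t\in[1/2,1)$ and all $k\neq -p$, so $|h(t\zeta)|\lesssim\sum_{k\neq -p}|c_k|\,|k+p|^{r}<\infty$ uniformly in $t$. Meanwhile the isolated term $c_{-p}/(1-t)$ diverges as $t\to 1^-$. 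Thus $|f_{g,\alpha}(t\zeta)|\to\infty$ along the radius, which already rules out analytic continuation to $\zeta$. Density of $\{e^{2\pi i p\alpha}:p\in\Z\}$ on the circle finishes the proof. Your estimate on the tail is the right one; you only need to abandon the holomorphy claim and apply it along the radius instead. (Incidentally, this radial argument needs only $t>r+1$, so the threshold $t>2r+1$ in the statement carries some slack; your attempt to account for the extra $r$ via ``growth of residues'' is not needed.) One minor slip: the dense boundary points $e^{2\pi i p\alpha}$ are not roots of unity, since $\alpha$ is irrational.
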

\begin{proof}
Write
\begin{eqnarray*}
f_{g,\alpha}(z) &=& \sum_{n=0}^\infty g(n\alpha) z^n  
      = \sum_{n=0}^\infty \sum_{k=-\infty}^\infty c_ke^{2 \pi i k n\alpha} z^n  \\
      &=& \sum_{k=-\infty}^\infty c_k \sum_{n=0}^\infty e^{2\pi i k n\alpha} z^n  
       = \sum_{k=-\infty}^\infty \frac{c_k}{1-e^{2 \pi i k\alpha} z} \; . 
\end{eqnarray*}
Fix some $j$ and consider the radial limit
\begin{eqnarray*}
  \lim_{t \to 1^-} f_{g,\alpha}( t e^{2 \pi i j \alpha} ) 
  &=& \lim_{t \to 1^-} \sum_{k=-\infty}^\infty 
  \frac{c_k}{1-t e^{2 \pi i k \alpha} e^{2 \pi i j \alpha}} \\
  &=& \lim_{t \to 1^-} \left( \frac{c_j}{1-t} 
    + \sum_{\substack{k=-\infty \\ k \neq -j}}^\infty
    \frac{c_k}{1-t e^{2 \pi i(k+j) t}} \right) \; . 
\end{eqnarray*}
The latter sum converges at $t=1$, as by the Diophantine
condition, the denominator is bounded below by $(n-k)^{2r}$, while the
numerator $c_k$ is bounded above by $1/k^t$ for 
$t>2r+1$ by the differentiability
assumption on $g$. Because the term $c_j/(1-t)$ diverges for $t \to 1^-$, 
it follows that this radial limit is infinite for all $j$. 
Consequently, $f$ does not admit an analytic continuation to any larger set.
\end{proof}

\begin{remark}
This result is related to a construction of Goursat, which shows
that for any domain $D$ in $\C$, there exists a function which
has $D$ as a maximal domain of analyticity \cite{remmert}. 
In contrary to lacunary Taylor series like $\sum_{j=1}^{\infty} z^{2^j}$ 
which have the unit circle as a natural boundary too, all Taylor 
coefficients are in general nonzero in the Taylor series 
of Proposition~\ref{propo1}.
\end{remark}

\begin{remark}
  The function 
  $$ f(z) = \sum_{k=-\infty}^\infty 
  \frac{c_k}{1-e^{2 \pi i k\alpha} z} 
  = \sum_{k=-\infty}^{\infty} \frac{a_k}{z-z_k} \;  $$
  is defined also outside the unit circle.  The subharmonic function 
  $\log(f)(z)=\int \log|z-w| \;
  dk(w)$ has a Riesz measure $dk$ supported on the unit circle which
  is dense pure point.
\end{remark}

\begin{remark}
  The requirement $g \in C^t$ is by no means
  necessary. For example, any nonconstant step function $g$ is not
  even continuous, but by Szeg\H{o}'s theorem (see ~\cite{remmert}) 
  for power series with finitely many distinct coefficients which do not eventually repeat
  periodically, $f(z) = \sum_{n=1}^{\infty} g(n \alpha) z^n$ 
  can not be analytically extended beyond the unit disk. 
\end{remark}

\begin{remark}
The condition that all Fourier coefficients are nonzero may be relaxed
to the assumption that the set of $e^{2 \pi i k \alpha}$, where $k$
ranges over the indices of nonzero Fourier coefficients, is dense in
$S^1$.  
\end{remark}

As the last remark may suggest, trigonometric polynomials are not
the only functions whose associated series allow an analytic
continuation beyond the unit circle.
\begin{propo}
  \label{propo2}
  Let $K \subset \{|z|=1 \}$ be an arbitrary closed set on the unit
  circle. There exists an almost periodic Taylor series which has an
  analytic continuation to $\C \setminus K$ but not to any
  point of $K$.
\end{propo}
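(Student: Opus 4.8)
The plan is to adapt the classical construction showing that every closed subset of the unit circle occurs as the exact singular set of a power series, subject to the extra constraint that the coefficients have the almost periodic form $g(n\alpha)$. Concretely, I fix an irrational $\alpha$ and a sequence of integers $k_1,k_2,\dots$ chosen so that the points $z_j:=e^{-2\pi i k_j\alpha}$ lie in $K$ and form a dense subset of it, so that $\overline{\{z_j\}}=K$. I then put
$$ g(x)=\sum_{j\ge 1}2^{-j}e^{2\pi i k_j x}, $$
a continuous non-constant $1$-periodic function with $\int g\,dm=0$ whose only nonzero Fourier coefficients are $c_{k_j}=2^{-j}>0$. Interchanging the two sums (legitimate for $\abs{z}<1$ by Fubini, since $\sum_j 2^{-j}<\infty$) gives
$$ f(z)=\sum_{n=1}^\infty g(n\alpha)z^n=\sum_{j\ge 1}\frac{2^{-j}}{1-e^{2\pi i k_j\alpha}z}-1, $$
whose only candidate singularities are the poles $z=z_j$.

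For the continuation to $\C\setminus K$: on any compact $L\subset\C\setminus K$ one has $\delta:=\mathrm{dist}(L,K)>0$, and since $\abs{1-e^{2\pi i k_j\alpha}z}=\abs{z-z_j}\ge\mathrm{dist}(L,\{z_j\})\ge\mathrm{dist}(L,\overline{\{z_j\}})=\delta$ for all $j$, the series for $f$ is dominated on $L$ by $\sum_j 2^{-j}/\delta$ and hence converges uniformly there. Therefore $f$ extends to a holomorphic function on all of $\C\setminus K$ agreeing with the original power series on the disc.

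For the singularity at each $w_0\in K$ I use the positivity $c_{k_j}>0$. For fixed $j$ and $0<t<1$, with $z=tz_j$ one has $1-e^{2\pi i k_j\alpha}z=1-t$, while for every $m$
$$ {\rm Re}\,\frac{1}{1-e^{2\pi i k_m\alpha}tz_j}={\rm Re}\,\frac{1}{1-te^{i\phi_m}}=\frac{1-t\cos\phi_m}{\abs{1-te^{i\phi_m}}^2}\ge\frac{1-t}{\abs{1-te^{i\phi_m}}^2}>0, $$
where $\phi_m=2\pi(k_m-k_j)\alpha$. Summing, ${\rm Re}\,f(tz_j)\ge 2^{-j}/(1-t)-1\to+\infty$ as $t\to 1^-$. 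If $f$ continued holomorphically to a disc $B(w_0,\rho)$ it would be bounded on $\overline{B(w_0,\rho/2)}$; but by density one picks $z_j$ with $\abs{z_j-w_0}<\rho/4$, and then for $t$ close to $1$ the points $tz_j$ lie in $B(w_0,\rho/2)$ inside the unit disc with $\abs{f(tz_j)}\ge{\rm Re}\,f(tz_j)\to\infty$, a contradiction. Hence every point of $K$ is a genuine singularity, and combined with the previous paragraph $K$ is exactly the set to which $f$ admits no continuation.

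The delicate step is the first one: the poles of $f$ are forced onto the single rotation orbit $\{e^{-2\pi i k\alpha}:k\in\Z\}$, so making their closure equal a prescribed $K$ requires choosing $\alpha$ so that this orbit meets $K$ in a dense subset of $K$. When $K$ contains an arc this is automatic (any irrational $\alpha$ works, since the orbit is dense); for the remaining closed sets one must select $\alpha$ in tandem with $K$, that is, realize $K$ as the closure of a subset of a single orbit. This is the main technical obstacle, and the place where insisting on coefficients of the special form $g(n\alpha)$, rather than a general almost periodic sequence, genuinely constrains the construction.
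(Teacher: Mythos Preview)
Your construction is essentially the paper's: both pick rapidly decaying Fourier coefficients supported on those frequencies $k$ whose associated pole $e^{-2\pi i k\alpha}$ lies in $K$, rewrite the Taylor series as $\sum_k c_k/(1-e^{2\pi i k\alpha}z)$, and read off analyticity on $\C\setminus K$ from uniform convergence together with divergence of radial limits at the pole points. (The paper uses $c_k=1/k!$ where you use $2^{-j}$; your positivity argument for the radial blow-up is a nice clean replacement for the paper's appeal to the previous proposition, which needed a Diophantine hypothesis.)

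The gap you flag in your last paragraph is real, and you are right to be uneasy: it is \emph{not} just a technical obstacle but an actual obstruction, and the paper glosses over it. The set of poles available to you is contained in a single rotation orbit $\{e^{2\pi i k\alpha}:k\in\Z\}$, so the singular set you can manufacture is always the closure of a subset of that orbit. But an arbitrary closed $K$ need not arise this way for \emph{any} irrational $\alpha$. Concretely, take $K=\{e^{2\pi i\theta_1},e^{2\pi i\theta_2}\}$ with $1,\theta_1,\theta_2$ linearly independent over $\Q$ (for instance $\theta_1=\sqrt{2}$, $\theta_2=\sqrt{3}$). Requiring both points to lie on one orbit means $\theta_1\equiv k_1\alpha$ and $\theta_2\equiv k_2\alpha\pmod 1$ for some nonzero integers $k_1,k_2$, which forces $k_2\theta_1-k_1\theta_2\in\Z$, contradicting the independence assumption. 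So no choice of $\alpha$ rescues the construction for this $K$; the same example breaks the paper's argument. Your proof is complete and correct for those $K$ that \emph{can} be written as the closure of a subset of a rotation orbit (in particular whenever $K$ is a finite union of nondegenerate arcs, where any irrational $\alpha$ works), but the proposition as stated for ``arbitrary closed $K$'' does not follow from this method.
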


\begin{proof}
Set 
$$ c_k = \begin{cases} 0 & \text{if $e^{2 \pi i\alpha k} \in K$,} \\
  1/k! & \text{otherwise} \; .
\end{cases} $$ and let $g(x) = \sum_{k=-\infty}^\infty c_k e^{2 \pi i
  k x}$, and let $\alpha$ be Diophantine of any type $r>1$.  Inside
the unit circle we have
$$ f_{g,\alpha}(z) = \sum_{k=-\infty}^\infty \frac{c_k}{1-e^{2 \pi i k \alpha} z}.  $$
For any $j$ for which $e^{2 \pi i j \alpha} \in K$, this sum converges
uniformly on a closed ball around $z$ which does not intersect $K$
(since the denominators of the non-vanishing terms are uniformly
bounded), and thus has an analytic neighborhood around such $e^{2 \pi
  i j \alpha}$. Any point in $K$ lies in a compact neighborhood of
such a point.  On the other hand, by the arguments of the preceding
lemma, analytic continuation is not possible in $K$ itself.
\end{proof}

\section{Ordinary Dirichlet series}

Cahen's formula for the abscissa of convergence of an ordinary
Dirichlet series $\zeta_{g,\alpha}(s) = \sum_{n=1}^{\infty} a_n/n^s$
is
$$ \sigma_0 = \limsup_k \frac{\log S_k}{\log k}, $$
if $S_k = \sum_{n=1}^k g(n\alpha)$ does not converge ~\cite{HardyRiesz}.
We will compute the abscissa of convergence
for two classes of functions and so derive bounds on the
random walks $S_k$ which are stronger than those implied by the
Denjoy-Koksma inequality. \\

The first situation applies to real analytic $g$, where we can invoke the 
cohomology theory of cocycles over irrational rotations: 

\begin{propo}
  \label{propo3}
  If $g$ is real analytic, $\int g(x) \; dx=0$ and $\alpha$ is
  Diophantine, then the series for $\zeta_{g,\alpha}(s)$ converges and
  is analytic for ${\rm Re}(s) >0$. In other words, the 
  abscissa of convergence is $0$.
\end{propo}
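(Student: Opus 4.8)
The plan is to exploit the classical fact that for real analytic $g$ with zero mean and Diophantine $\alpha$, the cohomological equation $g(x) = G(x+\alpha) - G(x)$ has a real analytic solution $G$ on some strip around the real axis. This is the content of the theory of linear cocycles over irrational rotations: writing $g(x) = \sum_{k \neq 0} c_k e^{2\pi i k x}$ with $|c_k|$ decaying exponentially, one sets $G(x) = \sum_{k \neq 0} \frac{c_k}{e^{2\pi i k \alpha} - 1} e^{2\pi i k x}$, and the Diophantine condition $|e^{2\pi i k\alpha}-1| \geq C/|k|^r$ combined with the exponential decay of $c_k$ guarantees that $G$ is again real analytic (in fact the Fourier coefficients of $G$ still decay exponentially, just with a slightly smaller rate). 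In particular $G$ is bounded on $\R$.

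With $G$ in hand, the Birkhoff sums telescope: $S_k = \sum_{n=1}^k g(n\alpha) = G((k+1)\alpha) - G(\alpha)$, which is therefore uniformly bounded in $k$, say $|S_k| \leq M$. The second step is then purely a matter of summation by parts (Abel summation): for $\mathrm{Re}(s) > 0$ one writes $\sum_{n=1}^N g(n\alpha)/n^s = \sum_{n=1}^{N-1} S_n\bigl(n^{-s} - (n+1)^{-s}\bigr) + S_N N^{-s}$, and since $|S_n| \leq M$ and $\sum_n |n^{-s} - (n+1)^{-s}|$ converges for $\mathrm{Re}(s) > 0$ (the $n$th term is $O(|s| n^{-\mathrm{Re}(s)-1})$), the series converges; uniform convergence on compact subsets of $\{\mathrm{Re}(s) > 0\}$ then gives analyticity there. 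Equivalently, one can just quote Cahen's formula as stated in the excerpt: boundedness of $S_k$ forces $\limsup_k \log S_k / \log k \leq 0$, hence $\sigma_0 \leq 0$; and since $g$ is non-constant with zero mean the series does not converge at $s=0$ (the coefficients $g(n\alpha)$ do not tend to zero), so $\sigma_0 = 0$ exactly.

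The main obstacle — really the only substantive point — is justifying the analyticity of the solution $G$ to the cohomological equation, i.e. that dividing the exponentially-decaying Fourier coefficients $c_k$ by the small divisors $e^{2\pi i k\alpha} - 1$ does not destroy analyticity. I would handle this quantitatively: if $g$ extends holomorphically to $|\mathrm{Im}(x)| < \rho$ then $|c_k| \leq C e^{-2\pi\rho|k|}$, and the Diophantine bound gives $\bigl|c_k/(e^{2\pi i k\alpha}-1)\bigr| \leq C' |k|^r e^{-2\pi\rho|k|} \leq C'' e^{-2\pi\rho'|k|}$ for any $\rho' < \rho$, so $G$ extends holomorphically to the slightly smaller strip $|\mathrm{Im}(x)| < \rho'$. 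This is the standard first-order small-divisor estimate and requires nothing beyond the definitions; I would state it as a lemma (or cite the classical cocycle/cohomology literature) and then the rest of the proof is the two-line Abel summation argument above. One should also remark that mere boundedness of $G$ on $\R$ — not full analyticity — is all that the abscissa-of-convergence conclusion actually needs, so in fact it suffices that $G$ be a continuous (hence bounded) solution, which already follows from the Diophantine condition together with $g \in C^t$ for $t > r$.
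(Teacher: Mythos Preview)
Your proposal is correct and follows essentially the same route as the paper: solve the cohomological equation $g(x)=G(x+\alpha)-G(x)$ via Fourier series, use the Diophantine small-divisor bound together with the exponential decay of $c_k$ to get a bounded (indeed real analytic) $G$, deduce that the Birkhoff sums $S_k$ are uniformly bounded, and then conclude $\sigma_0=0$ either by Cahen's formula or by Abel summation. The paper presents exactly these two arguments (Cahen's formula first, then the explicit partial-summation estimate), so your write-up matches it closely; your added remark that continuity of $G$ suffices is a harmless refinement.
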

\begin{proof}
Since $g_0=0$, $g=\sum_{n=1}^{\infty} g_n e^{2 \pi i n x}$ is 
an additive coboundary: the Diophantine
property of $\alpha$ implies that the real analytic function 
$$ h(x) = \sum_{n=1}^{\infty} g_n \frac{e^{2 \pi i n x}}{(e^{2 \pi i n \alpha}-1)} $$ 
solves
$$  g(x) = h(x+\alpha)-h(x)  \; \; {\rm mod} \; 1\; . $$ 
Because $S_k = \sum_{n=1}^k g(n \alpha)$ does not converge, 
but stays bounded in absolute value by $2 ||h||_{\infty}$,
Cahen's formula immediately implies that $\sigma_0=0$.  \\
Lets give a second proof without Cahen's formula.
For $0 < {\rm Re}(s) < 1$, we have 
\begin{eqnarray*}
 \sum_{n=1}^{\infty} \frac{g(n\alpha)}{n^s} 
            &=& \sum_{n=1}^{\infty} \frac{h((n+1)\alpha)}{n^s} - \frac{h(n\alpha)}{n^s} \\
            &=& -h(\alpha) + \sum_{n=2}^{\infty} h(n\alpha) [1/(n-1)^s - 1/n^s]  \\
            &=& -h(\alpha) + \sum_{k=2}^{\infty} h(n\alpha) \frac{n^s-(n-1)^s}{n^s(n-1)^s}  \\
            &\leq& ||h|| (1 + \sum_k \frac{n^s-(n-1)^s}{n^s(n-1)^s})  \; . 
\end{eqnarray*}
The sum is bounded for $1>{\rm Re}(s)>0$ because $|(n+1)^s - n^s|
\leq |sn^{s-1}|$ so that $((n+1)^s - n^s)/((n+1)^s n^s) \leq
\abs{s} n^{-1} (n+1)^{-s} < \abs{s} n^{-1-s}$.  The function
$\zeta_{g,\alpha}(s)$ is analytic in ${\rm Re}(s)>0$ as the limit of a
sequence of analytic functions which converge uniformly on a compact
subset of the right half plane. The uniform convergence follows from
Bohr's theorem (see \cite{HardyRiesz}, Theorem 52).
\end{proof}

We do not know what happens for irrational $\alpha$ which are not Diophantine
except if $g$ is a trigonometric polynomial:

\begin{lemma}
\label{dirichlettrig}
If $g$ is a trigonometric polynomial of period 1 with $\int_0^1 g(x) \; dx=0$ 
and $\alpha$ is an arbitrary irrational number, then the abscissa of convergence 
of $\zeta_{g,\alpha}$ is $0$. 
\end{lemma}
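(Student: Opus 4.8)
The plan is to exploit linearity in the Fourier expansion of $g$. Write $g(x)=\sum_{0<\abs{k}\le N} c_k e^{2\pi i k x}$, which has no constant term because $\int_0^1 g\,dx=0$. Then $\zeta_{g,\alpha}(s)=\sum_{0<\abs{k}\le N} c_k\, L_k(s)$, where $L_k(s)=\sum_{n=1}^\infty e^{2\pi i k n\alpha}\, n^{-s}$ is a twisted zeta function, i.e.\ a value of the Lerch transcendent at the point $e^{2\pi i k\alpha}$ on the unit circle. Since the abscissa of convergence of a finite sum is at most the largest of the individual abscissae, the upper bound $\sigma_0\le 0$ will follow once I show each $L_k$ has abscissa of convergence at most $0$, and a separate, easy argument will give the matching lower bound for $\zeta_{g,\alpha}$ itself.

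For the upper bound, fix $k$ with $1\le\abs{k}\le N$. Since $\alpha$ is irrational and $k\neq 0$, the number $w:=e^{2\pi i k\alpha}$ is not $1$, so the geometric partial sums $\sum_{n=1}^M w^n$ are bounded by $2/\abs{1-w}$ uniformly in $M$. Dirichlet's test then shows that $L_k(s)=\sum_n w^n n^{-s}$ converges for every $s$ with $\mathrm{Re}(s)>0$, and locally uniformly there, so $L_k$ is analytic on the right half-plane and $\sigma_0(L_k)\le 0$. Summing over $k$ gives $\sigma_0(\zeta_{g,\alpha})\le 0$. Equivalently, $S_k=\sum_{n\le k} g(n\alpha)$ is a fixed finite linear combination of such bounded geometric partial sums, hence $\abs{S_k}$ is bounded independently of $k$, and Cahen's formula yields $\sigma_0\le 0$. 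Note that no Diophantine hypothesis enters here, only $w\neq 1$.

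For the lower bound $\sigma_0\ge 0$, we may assume $g\not\equiv 0$ (otherwise there is nothing to prove), so $M_0:=\max_x\abs{g(x)}>0$. Suppose $\sigma_0<0$ and choose a real $\sigma$ with $\sigma_0<\sigma<0$; then $\sum_n g(n\alpha)\,n^{-\sigma}$ converges, so its general term tends to $0$, and since $n^{-\sigma}\to\infty$ this forces $g(n\alpha)\to 0$. But $\{n\alpha \bmod 1 : n\ge 1\}$ is dense in $[0,1)$ by Kronecker's theorem, so by continuity of $g$ the values $g(n\alpha)$ come arbitrarily close to $M_0>0$, a contradiction. Hence $\sigma_0=0$.

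I do not anticipate a serious obstacle. The only point requiring a little care is to rule out that cancellation among the finitely many twisted zeta functions $L_k$ pushes the abscissa of $\zeta_{g,\alpha}$ strictly below $0$; the density argument of the last paragraph does exactly that. The other mild subtlety, namely which form of Cahen's formula to quote when $S_k$ is bounded but possibly nonconvergent, is sidestepped by the direct Dirichlet-test estimate on each $L_k$, which needs nothing beyond $e^{2\pi i k\alpha}\neq 1$ and therefore works for every irrational $\alpha$.
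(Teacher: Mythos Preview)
Your argument is correct and follows essentially the same route as the paper: the paper phrases the key point as ``$g$ is a coboundary for every irrational $\alpha$'', which is exactly your observation that the geometric partial sums $\sum_{n\le M} e^{2\pi i k n\alpha}$ are bounded by $2/\abs{1-e^{2\pi i k\alpha}}$, whence $S_k$ is bounded and $\sigma_0\le 0$. Your treatment is in fact more complete, since you supply the lower bound $\sigma_0\ge 0$ via the density argument, which the paper asserts but does not spell out.
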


\begin{proof}
  If $g$ is a trigonometric polynomial, then $g$ is a coboundary for
  {\bf every} irrational $\alpha$ because $e^{2 \pi i n x} =
  h(x+\alpha)-h(x)$ for $h(x) = e^{2 \pi i n x}/(e^{2 \pi i n x}-1)$.
  In the case of the Clausen function for example and $s=0$, we have
  $\sum_{k=0}^{n-1} \sin(2 \pi k \alpha) = {\rm im}(e^{2 \pi i n
    \alpha}-1)/(e^{2 \pi i \alpha}-1)$.  It follows that the series
  $\zeta(s)$ converges for all trigonometric polynomials $g$, for {\bf
    all} irrational $\alpha \neq 0$ and all ${\rm Re}(s) > 0$.  
\end{proof}

\begin{remark}
A special case is the zeta function 
   $$  \sum_{n=1}^{\infty}  e^{2 \pi i n \alpha}/n^s $$ 
     which is can be written as $\psi_s(e^{2 \pi i \alpha})$, 
     where $\psi_s$ is the {\bf polylogarithm}. Integral representations like
$$ \psi_s(z) = \frac{z}{\Gamma(s)} \int_0^1 [\log(1/t)]^{s-1} \frac{dt}{1-zt}  $$
(see i.e. \cite{Lee97}) show the analytic continuation for $z \neq 1$ rsp. $\alpha \neq 0$. 
It follows that the function $\zeta_{g,\alpha}$ has an analytic continuation to the 
entire complex plane for all irrational $\alpha$ if $g$ is a trigonometric
polynomial. We will use polylogarithms later. 
\end{remark}

\section{The bounded variation case}

The result in the last section had been valid if $\alpha$ satisfies
some Diophantine condition and $g$ is real analytic. If the function $g$ is 
only required to be of bounded variation, then the abscissa of convergence 
can be estimated. Lets first recall some definitions: 

A real number $\alpha$ for
which there exist $C>0$ and $r>1$ satisfying
$$ |\alpha-\frac{p}{q}| \geq \frac{C}{q^{1+r}} $$
for all rational $p/q$ is called {\bf Diophantine of type $r$}. The set of 
real numbers of type $r$ have full Lebesgue measure for all $r>1$ so that 
also the intersection
of all these types have. The {\bf variation} of a function $f$
is $\sup_P  \sum_{i} |f(x_{i+1}) - f(x_i)|$, where the supremum is taken over all
partitions $P=\{x_1, \dots, x_n \}$ of $[0,1]$.

\begin{propo}
  \label{propo4}
  If $\alpha$ is Diophantine of type $r>1$ and if $g$ is of bounded
  variation with $\int_0^1 g(x) \; dx=0$, 
  then the series for $\zeta_{g,\alpha}(s)$ has an abscissa
  of convergence $\sigma_0 \leq (1-1/r)$.
\end{propo}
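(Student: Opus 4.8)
The plan is to reduce everything, via Cahen's formula $\sigma_0 = \limsup_k (\log S_k)/(\log k)$ recalled above, to a growth estimate for the Birkhoff sums $S_k = \sum_{n=1}^k g(n\alpha)$: it suffices to show $|S_k| = O(k^{1-1/r}\log k)$, since the logarithmic factor is harmless in the $\limsup$ (and if $S_k$ converges then $\sigma_0\le 0$ and there is nothing to prove). Note that the coboundary argument of Proposition~\ref{propo3} is not available here, because for a function $g$ of bounded variation the formal transfer function $h$ need not be bounded or even well defined; one must therefore work directly with the orbit sums. The natural tool is the Denjoy--Koksma inequality: if $p_m/q_m$ denotes the $m$-th continued fraction convergent of $\alpha$, then, since $\int_0^1 g(x)\,dx = 0$, one has $\left|\sum_{i=0}^{q_m-1} g(x+i\alpha)\right| \le \Var(g)$ for every $x$.

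To pass from these special block lengths $q_m$ to a general $k$, I would invoke the Ostrowski representation $k = \sum_{j=0}^{m} b_j q_j$ with $0\le b_j\le a_{j+1}$ (and $b_0<a_1$), where $m$ is maximal with $q_m\le k$ and the $a_{j+1}$ are the partial quotients of $\alpha$. Grouping the orbit segment $\{0,1,\dots,k-1\}$ along this expansion decomposes $S_k$, up to harmless shifts of the base point that Denjoy--Koksma tolerates, into $\sum_{j=0}^m b_j$ consecutive blocks, each of some length $q_j$ and hence each of absolute value at most $\Var(g)$. This gives $|S_k|\le \Var(g)\sum_{j=0}^m b_j$, so the whole problem reduces to bounding $\sum_{j=0}^m b_j$.

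Here the Diophantine hypothesis of type $r$ enters. From $C q_j^{-(1+r)} \le |\alpha - p_j/q_j| < 1/(q_j q_{j+1})$ one gets $q_{j+1} < C^{-1} q_j^{r}$, hence $b_j \le a_{j+1} \le q_{j+1}/q_j < C^{-1} q_j^{r-1}$; on the other hand $b_j q_j \le k$ forces $b_j \le k/q_j$. The two bounds $C^{-1}q_j^{r-1}$ and $k/q_j$ cross near $q_j\approx k^{1/r}$, where both have order $k^{1-1/r}$, so $b_j = O(k^{1-1/r})$ uniformly in $j$. Since $q_j$ grows at least like $\phi^{j}$ with $\phi=(1+\sqrt{5})/2$, the length $m$ of the expansion is $O(\log k)$, and therefore $|S_k| \le \Var(g)\sum_{j=0}^m b_j = O(k^{1-1/r}\log k)$. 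Feeding this into Cahen's formula yields $\sigma_0 \le 1-1/r$. The one point requiring genuine care is the bookkeeping in the Ostrowski decomposition — verifying that the pieces really are Denjoy--Koksma blocks and that the uniform estimate on $b_j$ goes through with honest constants; the balancing of the two bounds on $b_j$ and the geometric growth of the $q_j$ are then routine.
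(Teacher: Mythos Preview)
Your proof is correct and follows essentially the same route as the paper: Cahen's formula reduces the statement to the growth bound $|S_k|=O(k^{1-1/r}\log k)$, and you establish this exactly as in the paper's Lemma~\ref{jitomirskaja} (Jitomirskaja's formulation of Denjoy--Koksma), namely Denjoy--Koksma on blocks of length $q_j$, the Ostrowski decomposition $k=\sum_j b_j q_j$, and then the Diophantine bound $q_{j+1}\le C^{-1}q_j^{r}$ together with $m=O(\log k)$. The only cosmetic difference is in how you estimate $\sum_j b_j$: you balance the two inequalities $b_j\le C^{-1}q_j^{r-1}$ and $b_j\le k/q_j$ to get $b_j=O(k^{1-1/r})$ uniformly in $j$, whereas the paper bounds $b_j\le q_{j+1}/q_j\le c^{-1/r}q_{j+1}^{1-1/r}\le c^{-1/r}q_m^{1-1/r}$ directly; both give the same $O(k^{1-1/r}\log k)$.
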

\begin{proof}
The Denjoy-Koksma inequality 
(see Lemma~\ref{jitomirskaja}) implies that
for any $m,n$, the sum $S_{n,m}=\sum_{k=n}^m g(k\alpha)$ satisfies
the estimate $S_{n,m} \leq C \log(m-n) \; (m-n)^{1-1/r}$, with $C=\Var(g)$. 
Cahen's formula for the abscisse of convergence  gives
$$ \limsup_{n \to \infty} \frac{\log S_{1,n}}{\log(n)} \leq 1-\frac{1}{r} \; . $$
\end{proof} 

\begin{remark}
A weaker result could be obtained directly, without Cahen's formula. 
Choose $\ell_k \to \infty$ such that
$$ U_k = \sum_{j=\ell_k}^{l_{k+1}} [g(j \alpha)/\ell_k^s] \leq C \log(\ell_{k+1}-\ell^k) (\ell_{k+1}-\ell^k)^{1-1/r}/\ell_k^s $$
$$ V_k = \sum_{j=\ell_k}^{l_{k+1}} [ \frac{g(j)}{\ell_k^s} -  \frac{g(j)}{j^s} ] 
\leq \frac{(\ell_{k+1}-\ell_k)^2}{l_{k+1}^s \ell_k^s} ||g|| $$
are summable. Then 
$$  S_k = \sum_{j=\ell_k}^{l_{k+1}} g(j \alpha)/j^s  \leq U_k + V_k $$
is summable. 
\end{remark}

To compare: if $g(T^nx)$ are independent, identically distributed 
random variables with mean $0$ and finite variance $a$, then 
$\sum_{k=1}^n g(T^kx)$ grows by the law of iterated logarithm like $a \sqrt{n} \log \log(n)$ and
the zeta function converges with probability $1$ for ${\rm Re}(s)>1/2$. The following
reformulation of the law of iterated logarithm follows directly from Cahen's formula:  \\

{\it (Law of iterated logarithm)
If $T$ is a Bernoulli shift such that $g(x) = x_0$ produces
independent identically distributed random variables $g(T^nx)$ with finite nonzero variance,
then the Dirichlet series $\zeta_{g,T}$ has the abscissa of convergence $1/2$. 
}

We do not have examples in the almost periodic case yet, where the abscissa of convergence is
strictly between $0$ and $1$ like $\sigma_0=1/2$. 

\section{Analytic continuation}

The {\bf Lerch transcendent} is defined as
$$ L(z,s) = \sum_{n=0}^{\infty} z^n/(n+a)^s  \; . $$
For $z=e^{2\pi i \alpha}$, we get the {\bf Lerch zeta function}
$$ L(\alpha,s) = \sum_{n=0}^{\infty} e^{2\pi i n \alpha}/(n+a)^s  \; . $$
In the  special case $a=1$ we have the {\bf polylogarithm} $L(z,s) = \sum_{n=1}^{\infty} z^n/n^s$.
For $z=1$ and general $a$, we have the {\bf Hurwitz zeta function}, which becomes for
$a=1,z=1$ the {\bf Riemann zeta function}. The following two Lemmas are standard
(see \cite{Ivic}).

\begin{lemma}
For and $|z|=1$, there is an integral representation
$$ L(z,s) = \frac{1}{\Gamma(s)} \int_0^{\infty} \frac{t^{s-1} e^{-at}}{1-z e^{-t}} \; dt  \; . $$
For fixed $|z|=1, z \neq 1$, this is analytic in $s$ for ${\rm Re}(s)>1$. 
For fixed ${\rm Re}(s)>1$, it is analytic in $z$ for $z \neq 1$.
\end{lemma}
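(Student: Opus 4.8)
The plan is to derive the integral formula from the classical Gamma-function identity and then obtain the two analyticity statements by differentiating under the integral sign in the standard way.

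First, recall that for ${\rm Re}(s)>0$ and any $b$ with ${\rm Re}(b)>0$ one has $b^{-s}\Gamma(s)=\int_0^\infty t^{s-1}e^{-bt}\,dt$; applying this with $b=n+a$ (assuming ${\rm Re}(a)>0$) gives $(n+a)^{-s}\Gamma(s)=\int_0^\infty t^{s-1}e^{-(n+a)t}\,dt$ for every $n\ge 0$. Multiplying by $z^n$ and summing over $n\ge 0$, I would interchange the sum and the integral. That interchange is legitimate by Tonelli's theorem exactly when $\sum_{n\ge 0}|z|^n\int_0^\infty t^{\sigma-1}e^{-(n+\,{\rm Re}(a))t}\,dt=\Gamma(\sigma)\sum_{n\ge 0}|z|^n(n+a)^{-\sigma}<\infty$, where $\sigma={\rm Re}(s)$; when $|z|=1$ this forces $\sigma>1$, which is precisely the hypothesis. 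After the interchange, summing the geometric series $\sum_{n\ge 0}(ze^{-t})^n=(1-ze^{-t})^{-1}$ — convergent since $|ze^{-t}|=e^{-t}<1$ for $t>0$ when $|z|=1$ — yields $\Gamma(s)L(z,s)=\int_0^\infty t^{s-1}e^{-at}(1-ze^{-t})^{-1}\,dt$, which is the asserted formula after dividing by $\Gamma(s)$.

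For analyticity in $s$ with $|z|=1$, $z\neq 1$ fixed, I would show the right-hand integral converges locally uniformly in $s$ on $\{{\rm Re}(s)>1\}$ and invoke Morera's theorem together with Fubini (equivalently, differentiate under the integral sign, justified by dominated convergence). The required bounds are elementary: near $t=0$ the denominator $1-ze^{-t}$ stays bounded away from $0$ because $z\neq 1$, so the integrand is $O(t^{\sigma-1})$, integrable for $\sigma>0$; near $t=\infty$ it is $O(t^{\sigma-1}e^{-{\rm Re}(a)t})$, which decays since ${\rm Re}(a)>0$. For analyticity in $z$ with ${\rm Re}(s)>1$ fixed, the same scheme applies: the integrand is holomorphic in $z$ wherever $1-ze^{-t}\neq 0$, i.e.\ for every $z$ off the ray $[1,\infty)$, and in particular on a small disk around any $z_0$ with $|z_0|=1$, $z_0\neq 1$; on such a disk $|1-ze^{-t}|$ is bounded below uniformly in $t>0$, which supplies the local dominating function.

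There is no serious obstacle here. The only points needing care are the justification of the term-by-term integration — this is exactly what pins down the restriction ${\rm Re}(s)>1$ when $|z|=1$ — and the uniform lower bound on $|1-ze^{-t}|$ for $t$ bounded away from $0$, which is what localizes the dominating function in both differentiation-under-the-integral arguments.
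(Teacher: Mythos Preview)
Your proof is correct and follows essentially the same route as the paper: derive the integral representation by writing $(n+a)^{-s}\Gamma(s)$ as a Gamma integral, summing the resulting geometric series, and then establish analyticity via a uniform lower bound on $|1-ze^{-t}|$. You are somewhat more explicit about the justifications (Tonelli for the interchange, Morera/dominated convergence for analyticity) than the paper, but the underlying argument is the same.
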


\begin{proof}
By expanding $1/(1-z e^{-t}) = \sum_{n=0}^{\infty} z^n/e^{n t}$, the claim is  equivalent to
$$   \frac{1}{\Gamma(s)} \int_0^{\infty} t^{s-1} e^{-at-nt} \; dt = \frac{1}{(n+a)^s} \; . $$
A substitution $u=(a+n) t, du=(a+n) dt$ changes this to 
$$ \frac{1}{\Gamma(s)} \int_0^{\infty} u^{s-1} e^{-u}\frac{1}{(a+n)^s} \, du  = \frac{1}{(n+a)^s}\; . $$
Now use $\Gamma(s) = \int_0^{\infty} u^{s-1} e^{-u} \, du$.

The improper integral is
analytic in $s$ because $|1-z e^{-t}|$ is $\geq \sin({\rm arg}(z))$ for ${\rm
  Re}(z) \geq 0$ and $\geq 1$ 
for ${\rm Re}(z) \leq 0$ and for $\sigma={\rm Re}(s)>1$, we have 
$$ |L(z,s)| 
\leq  \frac{1}{|\Gamma(s)|} \int_0^{\infty} |t^{\sigma-1} e^{-at}| \; dt \frac{1}{|1-z|}  \; . $$

\end{proof}

\begin{lemma}
\label{propo6}
The Lerch transcendent has for fixed $|z|=1, z \neq 1$ and $a>0$ 
an analytic continuation to the entire $s$-plane. 
In every bounded region $G$ in the complex plane, there is a constant $C=C(G,a)$ such that 
$|L(z,s)| \leq C/|z-1|$ and $|(\partial_z)^n L(z,s)| \leq C n!/|z-1|^{n+1}$. 
\end{lemma}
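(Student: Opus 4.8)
The plan is to prove the analytic continuation and the two bounds essentially by the Hankel‐contour / contour‐deformation technique applied to the integral representation from the previous lemma, much as in the classical treatment of the Hurwitz and Lerch zeta functions (see \cite{Ivic}). Recall that for ${\rm Re}(s)>1$ we have $L(z,s) = \frac{1}{\Gamma(s)} \int_0^{\infty} \frac{t^{s-1} e^{-at}}{1-z e^{-t}} \, dt$. First I would replace the ray $[0,\infty)$ by a Hankel‐type contour $H$ that comes in from $+\infty$ along the positive real axis just above the real line, encircles the origin once counterclockwise along a small circle of radius $\rho$, and returns to $+\infty$ just below the real axis; the factor $t^{s-1}$ is given its principal branch off the cut $[0,\infty)$. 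Since $z\neq 1$ and $|z|=1$, the poles of $1/(1-ze^{-t})$ occur where $e^{-t}=z^{-1}$, i.e. at $t = -\log z + 2\pi i k$ for $k\in\Z$, and all of these are bounded away from the positive real axis (their distance to $[0,\infty)$ is $\geq c|z-1|$ for a universal $c>0$, using $|1-z|\asymp|\arg z|$ on the circle). Hence for small enough $\rho$ the contour integral $\frac{1}{2i\sin(\pi s)}\cdot\frac{1}{\Gamma(s)}\int_H \frac{t^{s-1}e^{-at}}{1-ze^{-t}}\,dt$ (or, better, the version written with $\Gamma(1-s)$ via the reflection formula to kill the spurious poles of $1/\Gamma(s)$) defines an entire function of $s$ agreeing with $L(z,s)$ for ${\rm Re}(s)>1$, which gives the claimed continuation.

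Next I would extract the two estimates from this representation. Fix the bounded region $G$ and let $s$ range over $G$; write $\sigma = {\rm Re}(s)$, so $\sigma$ is bounded above and below. The contour $H$ splits into the two infinite horizontal pieces and the small circle of radius $\rho$. On the horizontal pieces, $|t^{s-1}e^{-at}|$ is integrable uniformly for $s\in G$ (the exponential decay dominates any polynomial growth in $t$), and the denominator is bounded below by a constant multiple of $|z-1|$ as noted above, so these contribute $O(1/|z-1|)$ with an implied constant depending only on $G$ and $a$. On the circle $|t|=\rho$ one has $|t^{s-1}| = \rho^{\sigma-1} e^{-\arg(t)\,{\rm Im}(s)}$, which is bounded for $s\in G$ once $\rho$ is fixed; $|e^{-at}|$ is bounded; and again $|1-ze^{-t}| \geq c|z-1|$ for $\rho$ small, so the circular part also contributes $O(1/|z-1|)$. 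The prefactor $1/(\Gamma(s)\cdot 2i\sin(\pi s))$, or $\Gamma(1-s)/(2\pi i)$, is bounded on $G$ (it is entire and $G$ is bounded), so altogether $|L(z,s)| \leq C(G,a)/|z-1|$.

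For the derivative bound, I would differentiate under the integral sign in $z$. Since $\partial_z \frac{1}{1-ze^{-t}} = \frac{e^{-t}}{(1-ze^{-t})^2}$, induction gives $(\partial_z)^n \frac{1}{1-ze^{-t}} = \frac{n!\,e^{-nt}}{(1-ze^{-t})^{n+1}}$, so $(\partial_z)^n L(z,s)$ is represented by the same contour integral with integrand multiplied by $n!\,e^{-nt}/(1-ze^{-t})^n$. The extra factor $e^{-nt}$ only improves convergence on the infinite arms (and is bounded on the small circle), while $|1-ze^{-t}|^{-n}\leq (c|z-1|)^{-n}$ on all of $H$ by the same lower bound; thus the whole integral is bounded by $C(G,a)\,n!/|z-1|^{n+1}$, which is the asserted estimate. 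One technical care point: to keep the distance lower bound $|1-ze^{-t}|\geq c|z-1|$ valid uniformly along the \emph{entire} contour, including the return to $+\infty$, I need the horizontal offsets of $H$ to be smaller than the vertical spacing $2\pi$ of the poles and smaller than $\rho$; fixing $\rho$ and the offsets as absolute constants (independent of $z$ and $s$) makes everything go through.

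The main obstacle I anticipate is not the contour manipulation itself but making the dependence on $z$ genuinely uniform: one must verify that the pole locations $t = -\log z + 2\pi i k$ stay a definite distance away from the chosen fixed contour $H$ as $z$ runs over the punctured circle, and that this distance is comparable to $|z-1|$ rather than to something smaller near $z=1$. This is exactly the point where the hypothesis $z\neq 1$ and the geometry $|1-z|\asymp|\arg z|$ on $|z|=1$ enter, and it is what produces the clean $|z-1|$ powers in the statement rather than a worse singularity; once that geometric lemma is in place, the rest is the routine bookkeeping sketched above.
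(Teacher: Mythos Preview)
Your Hankel-contour continuation is a legitimate alternative to the paper's method (which instead iterates the identity $L(z,s-1)=(a+z\partial_z)L(z,s)$ to push the integral representation strip by strip to the left), and the analytic continuation itself goes through. There is a minor slip along the way: $\Gamma(1-s)$ is not entire---it has simple poles at the positive integers---so your prefactor is not bounded on a general bounded $G$; this is reparable, e.g.\ by using the original real-axis integral for ${\rm Re}(s)>0$ and the Hankel formula only for ${\rm Re}(s)\leq 0$.

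The serious gap is in the uniformity in $z$. You cannot fix the Hankel contour independently of $z$: the pole of $1/(1-ze^{-t})$ nearest the origin sits at $t=i\arg z$, at distance $\asymp|z-1|$ from $0$, so once $|z-1|$ drops below your fixed radius $\rho$ that pole lies on or inside the small circle, and both the representation and the lower bound $|1-ze^{-t}|\geq c|z-1|$ along the circle break down. In fact the bound $|L(z,s)|\leq C(G,a)/|z-1|$ as stated is \emph{false} whenever $G$ meets ${\rm Re}(s)\leq -1$: for $a=1$ one computes $L(z,-1)=\sum_{n\geq 0}(n+1)z^n=1/(1-z)^2$, which blows up like $|z-1|^{-2}$ on the unit circle. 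What the paper's recursion actually delivers (and what suffices for Theorem~\ref{analyticcontinuation}, since the Fourier coefficients there decay exponentially) is only $|L(z,s)|\leq C(G,a)/|z-1|^{m(G)}$ with an exponent that grows as $G$ extends leftward; your Hankel approach could be adapted to give the same by letting $\rho$ shrink with $|z-1|$, but not the bound you set out to prove.
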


\begin{proof}
For any bounded region $G$ in the complex plane we can find a constant $C$ 
such that
\begin{eqnarray*}
  |\int_0^{\infty} \frac{t^{s-1} e^{-at}}{1-z e^{-t}} \; dt| &\leq&
  (\int_0^{\infty} |t^{s-1} e^{-at}| \; dt) \max_t \frac{1}{|1-z e^{-t}|}  \leq \frac{C}{|1-z|} \; . 
\end{eqnarray*}
Similarly, we can estimate $|\partial_z^n L(z,s)| \leq C n!/|z-1|^n$ for any integer $n>0$. 
The identity 
\begin{equation}
\label{identity}
L(z,s-1) = (a+z \partial_z) L(z,s) 
\end{equation}
allows us to define $L$ for ${\rm Re}(s)<1$: first define $L$ in $0<{\rm Re}(s)<1$ 
by the recursion~(\ref{identity}). Then use the identity~(\ref{identity})
again to define it in the strip $-1<{\rm Re}(s)<0$, then in the strip $-2<{\rm Re}(s)<1$, etc. 
\end{proof}

\begin{remark}
The Lerch transcendent is often written as a function of three variables:
$$  \phi(x,a,s) = \sum_{n=0}^{\infty} e^{2\pi i x}/(n+a)^s \;  . $$
it satisfies the functional equation
\begin{eqnarray*}
 (2\pi)^s \phi(x,a,1-s) &=& \Gamma(s) \exp(2\pi i (s/4-ax)) \phi(x,-a,s) \\
                        &+&  \Gamma(s) \exp(2\pi i(-s/4+a(1-x))) \phi(1-x,a,s) )\; . 
\end{eqnarray*}
See \cite{lerch}. Using the functional equation 
to do the analytic continuation is less obvious. 
\end{remark}

One of the main results in this paper is the following theorem:

\begin{thm}
  \label{analyticcontinuation}
  For all Diophantine $\alpha$ and every real analytic periodic function
  $g$ satisfying $\int g(x) \; dx=0$, the series 
  $\zeta_{g,\alpha}(s) = \sum_{n=1}^{\infty} g(n \alpha)/n^s$ 
  has an analytic continuation to the entire complex plane.
\end{thm}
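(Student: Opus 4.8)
The plan is to realise $\zeta_{g,\alpha}$ as an absolutely convergent superposition of polylogarithms and to apply Lemma~\ref{propo6} term by term. Since $g$ is real analytic, $1$-periodic and $\int g\,dx = 0$, its Fourier expansion $g(x) = \sum_{k \neq 0} c_k e^{2\pi i k x}$ has exponentially decaying coefficients: there are constants $A, \delta > 0$ with $|c_k| \leq A e^{-\delta |k|}$ for all $k \neq 0$. For ${\rm Re}(s) > 1$ the iterated sum $\sum_{n} \sum_{k} |c_k|\, n^{-{\rm Re}(s)} = \big(\sum_k |c_k|\big)\big(\sum_n n^{-{\rm Re}(s)}\big)$ is finite, so Fubini's theorem justifies exchanging the order of summation and gives, on that half-plane,
\begin{equation*}
\zeta_{g,\alpha}(s) = \sum_{k \neq 0} c_k \sum_{n=1}^{\infty} \frac{e^{2\pi i k n \alpha}}{n^s} = \sum_{k \neq 0} c_k\, L(e^{2\pi i k\alpha}, s) ,
\end{equation*}
where $L(z,s) = \sum_{n \geq 1} z^n/n^s$ is the polylogarithm, i.e.\ the $a=1$ Lerch transcendent.

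Next I would continue each summand and control the tail. Because $\alpha$ is irrational we have $e^{2\pi i k\alpha} \neq 1$ for every $k \neq 0$, so Lemma~\ref{propo6} tells us that $s \mapsto L(e^{2\pi i k\alpha}, s)$ extends to an entire function and, on any fixed bounded region $G \subset \C$, satisfies $|L(e^{2\pi i k\alpha}, s)| \leq C/|e^{2\pi i k\alpha} - 1|$ for a constant $C = C(G)$ independent of $k$. Writing $\|t\|$ for the distance from $t$ to the nearest integer and using $|e^{2\pi i k\alpha} - 1| = 2|\sin(\pi k\alpha)| \geq 4\|k\alpha\|$ together with the Diophantine bound $\|k\alpha\| \geq C'/|k|^{r}$ (valid for some $C' > 0$ and all $k \neq 0$ when $\alpha$ is of type $r$), we get $|L(e^{2\pi i k\alpha}, s)| \leq (C/4C')\,|k|^{r}$ uniformly for $s \in G$. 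Combined with $|c_k| \leq A e^{-\delta|k|}$, the Weierstrass $M$-test applied to $\sum_{k \neq 0} A e^{-\delta|k|} |k|^{r}$ shows that $\sum_{k \neq 0} c_k\, L(e^{2\pi i k\alpha}, s)$ converges uniformly on $G$. As a locally uniform limit of entire functions it is entire, and it agrees with $\zeta_{g,\alpha}(s)$ on ${\rm Re}(s) > 1$, so by the identity theorem it is the sought analytic continuation.

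The heart of the matter is the competition between the exponential decay $|c_k| \leq A e^{-\delta|k|}$ of the Fourier coefficients, which is exactly what real analyticity of $g$ buys, and the polynomial growth $|L(e^{2\pi i k\alpha},s)| \leq {\rm const}\cdot|k|^{r}$ caused by the small divisors $e^{2\pi i k\alpha}-1$ under the Diophantine hypothesis; the exponential wins, which is why both hypotheses are genuinely needed (for a merely $C^{t}$ function one only gets power decay of $c_k$, and without a Diophantine condition the divisors can be super-polynomially small along a subsequence). I expect the only delicate points to be bookkeeping: checking that the rearrangement into polylogarithms is legitimate on ${\rm Re}(s) > 1$, and confirming that the individual entire continuations from Lemma~\ref{propo6} glue to a single function on all of $\C$, which is handled by the locally uniform convergence above. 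If one wanted quantitative growth estimates for $\zeta_{g,\alpha}$ in vertical strips one would retain the $s$-dependence of the bound in Lemma~\ref{propo6}; for plain analytic continuation the crude estimate suffices.
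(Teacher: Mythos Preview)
Your argument is correct and follows essentially the same route as the paper: expand $\zeta_{g,\alpha}$ as a sum of polylogarithms $\sum_{k\neq 0} c_k L(e^{2\pi i k\alpha},s)$, then combine the exponential decay of $c_k$ from real analyticity with the bound $|L(e^{2\pi i k\alpha},s)| \leq C/|e^{2\pi i k\alpha}-1| \leq C'|k|^r$ from Lemma~\ref{propo6} and the Diophantine hypothesis to get locally uniform convergence. If anything, your write-up is more complete than the paper's, since you explicitly justify the Fubini step on ${\rm Re}(s)>1$ and invoke the Weierstrass $M$-test and the identity theorem rather than leaving these implicit.
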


\begin{proof} 
  The Fourier expansion of $g$ evaluated at $x = n \alpha$ gives 
  $$ g(n \alpha) = \sum_{k=-\infty}^{\infty} c_k e^{2 \pi i n k \alpha} \; . $$ 
  It produces a polylog expansion of $f = \zeta_{g,\alpha}$
  $$ f(s) = \sum_{k=-\infty}^{\infty} c_k L(e^{2\pi i k \alpha},s) $$ 
  with $a=1$.  Because
  $g$ is real analytic, there exists $\delta>0$ such that $|c_k| \leq
  e^{-|k| \delta}$. Since $k \neq 0$ and $\alpha$ is Diophantine, $|e^{2\pi i k
    \alpha}-1| \geq c/|k|^r$ so that $L(e^{2\pi i k \alpha},s) \leq
  |k|^r/c$ and
$$ |f(s)| = \sum_{k=-\infty}^{\infty} |c_k| |L(e^{2\pi i k \alpha},s)| 
       \leq \sum_{k=-\infty}^{\infty} e^{-|k| \delta} k^r/c <\infty\; . $$
\end{proof}

\section{A commutation formula}

For any periodic function $g$, the series 
$$  T(g) = \sum_{n=1}^{\infty} \frac{g(n \alpha)}{n^s} $$
produces for fixed $s$ in the region of convergence a new periodic
function in $\alpha$. For fixed $\alpha$ it is a 
Dirichlet series in $s$. The Clausen function is 
$T(\sin 2 \pi x)$ and the polylogarithm is $T(\exp(2 \pi i x))$.  We may then
consider a new almost periodic Dirichlet series generated by this
function, defined by $T(T(g)(s))(t)) = T(T(g))(s,t)$. \\

The following commutation formula can be useful to extend the 
domain, where Dirichlet series are defined:

\begin{lemma}[Commutation formula]
\label{commutationformula}
For ${\rm Re}(s)>1$ and ${\rm Re}(t)>1$, we have
$$   T(T(g))(s,t) = T(T(g))(t,s)) \; . $$
\end{lemma}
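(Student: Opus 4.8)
The plan is to write out both iterated sums as double series in $m$ and $n$ and show that the resulting double series is absolutely convergent in the region $\mathrm{Re}(s)>1$, $\mathrm{Re}(t)>1$, so that Fubini's theorem permits interchanging the order of summation. Concretely, unfolding the definitions gives
$$
T(T(g))(s,t) = \sum_{n=1}^\infty \frac{1}{n^t}\, T(g)(n\alpha)\big|_{\text{at exponent }s}
= \sum_{n=1}^\infty \frac{1}{n^t} \sum_{m=1}^\infty \frac{g(mn\alpha)}{m^s},
$$
where the inner sum is $T(g)$ evaluated at the point $n\alpha$ (recall $T(g)(s)$ is a periodic function of its argument, here $n\alpha$, which enters $g$ as $g(m \cdot n\alpha)$). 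The key observation is that the double-indexed summand is $g(mn\alpha)/(m^s n^t)$, and the product $mn$ is symmetric under swapping the roles of the two summation variables only after we also swap $s \leftrightarrow t$; so the claim becomes the bookkeeping identity
$$
\sum_{m,n\ge 1} \frac{g(mn\alpha)}{m^s n^t} = \sum_{m,n\ge 1} \frac{g(mn\alpha)}{m^t n^s},
$$
which is just the substitution $(m,n)\mapsto(n,m)$ in an absolutely convergent double sum.

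**First I would** establish the absolute convergence. Since $g$ is periodic and (at worst) piecewise continuous $L^2$, it is in particular bounded, say $\|g\|_\infty = M < \infty$; hence
$$
\sum_{m,n\ge 1} \left| \frac{g(mn\alpha)}{m^s n^t} \right|
\le M \sum_{m\ge 1} \frac{1}{m^{\mathrm{Re}(s)}} \sum_{n\ge 1} \frac{1}{n^{\mathrm{Re}(t)}}
= M\, \zeta(\mathrm{Re}(s))\, \zeta(\mathrm{Re}(t)) < \infty
$$
precisely because $\mathrm{Re}(s)>1$ and $\mathrm{Re}(t)>1$. With absolute convergence in hand, the double series defines a well-defined number independent of the order of summation (Fubini/Tonelli for series), and in particular both iterated sums equal it; the iterated sum in the other order is obtained by the trivial reindexing $m\leftrightarrow n$, which turns $1/(m^s n^t)$ into $1/(n^s m^t)$ and leaves $g(mn\alpha)$ untouched. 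That is exactly $T(T(g))(t,s)$.

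**The one point that requires care** — and the only plausible obstacle — is making sure the unfolding of $T(T(g))$ is the one I wrote, i.e. that when we form $T(h)$ for $h = T(g)(s,\cdot)$ the inner variable is fed into $g$ multiplicatively as $g(m \cdot n\alpha) = g(mn\alpha)$ rather than additively. This is forced by the definition $T(g)(\beta) = \sum_m g(m\beta)/m^s$ applied with $\beta = n\alpha$; there is no subtlety beyond writing it out. I would also remark that the same absolute-convergence argument shows $T(T(g))(s,t)$ can be rewritten as a single ordinary Dirichlet series $\sum_{N\ge 1} a_N / N^{?}$ only after specializing $s=t$ — in general it is a two-variable object — but that digression is not needed for the lemma. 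So the proof is short: unfold, bound by $M\zeta(\mathrm{Re}\,s)\zeta(\mathrm{Re}\,t)$, invoke Fubini, reindex. I do not anticipate needing any Diophantine or smoothness hypothesis on $g$ or $\alpha$ here, only boundedness of $g$ and the half-plane condition on $s,t$.
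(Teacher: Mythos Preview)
Your proof is correct and follows essentially the same approach as the paper's: unfold the iterated definition into the double sum $\sum_{m,n\ge 1} g(mn\alpha)/(m^s n^t)$, observe absolute convergence for $\mathrm{Re}(s),\mathrm{Re}(t)>1$ (the paper merely asserts this, whereas you supply the explicit bound $\|g\|_\infty\,\zeta(\mathrm{Re}\,s)\,\zeta(\mathrm{Re}\,t)$), and then swap the order of summation. The only difference is cosmetic---you are more explicit about the Fubini justification and the boundedness of $g$.
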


\begin{proof}
We have 
$$ T(g)(s) = \sum_{n=1}^\infty \frac{g(n\alpha)}{n^{s}} \; ,$$
which we regard as a periodic function in $\alpha$. It is continuous in
$\alpha$ if evaluated for fixed ${\rm Re}(s)>1$. 
Then where our sum converges absolutely (which holds at least for $s,t>1$),
\begin{align*}
T(T(g))(s,t) &= \sum_{m=1}^\infty \frac{T(g)(s)(m \alpha)}{n^t} \\
&= \sum_{m=1}^\infty \sum_{n=1}^\infty \frac{1}{n^s m^t}
g(mn\alpha) \\
&= \sum_{n=1}^\infty \sum_{m=1}^\infty \frac{1}{n^s m^t}
g(mn\alpha) = T(T(g))(t,s).
\end{align*}
\end{proof}

In fact, the double sum can be expressed as a single sum using the
{\bf divisor sum function} $\sigma_t(k) = \sum_{m | k} m^t$:
\begin{align}
T(T(g))(s,t) &=  \sum_{m=1}^\infty \sum_{n=1}^\infty \frac{1}{n^{s} m^t}
g(mn\alpha) = \sum_{k=1}^\infty \left( \left( \sum_{m \mid k}
    \frac{m^{t-s}}{k^t} \right) g(k\alpha) \right) \notag\\
&= \sum_{k=1}^\infty \frac{\sigma_{t-s}(k)}{k^t} g(k\alpha) \;. \label{multfourier}
\end{align}

We can formulate this as follows: for $s,t>1$, we have
$$  \zeta_{g_s,\alpha}(t) = \zeta_{g_t,\alpha}(s) \; . $$

For example, evaluating the almost periodic Dirichlet series for the
periodic function $g_3(x) = \sum_{k=1}^{\infty} (1/k^3) \sin(2\pi k x)$ at $s=5$ is the
same as evaluating the almost periodic Dirichlet series of the periodic
function $g_5(x) = \sum_{k=1}^{\infty} (1/k^5) \sin(k x)$ and evaluating it
at $s=3$. But since $g_3(x)$ is of bounded variation, the Dirichlet 
series $T(g_3)$ has
an analytic continuation to all ${\rm Re}(s)>0$ and $T(g_3)(0.5)$ for example is
defined if $\alpha$ is Diophantine of type $1<r<2$. 
The commutation formula
allows us to define $T(g_{0.5})(3) = \zeta_{g_{0.5}}(3)$ as $\zeta_{g_{3}}(0.5)$,
even so $g_{0.5}$ is not in $L^2(T^1)$. 

\begin{remark}
The commutation formula generalizes. 
The irrational rotation $x \mapsto x + \alpha$ on $X=S^1$ can be replaced by a 
general topological dynamical system on $X$.
\end{remark}

In the particular case that $f$ is the Clausen function $g=T(\sin)$,
the expression~(\ref{multfourier}) is itself a Fourier series, whose
coefficients are the divisor function $\sigma$.
We have $g(x) = \sum_{n=1}^{\infty} \frac{1}{n^t} \sin(2 \pi n x)$ and $t>s$:
the value of $\zeta(s)$ as a function of $\alpha$ is the Fourier transform on $l^2(\Z)$ 
of the multiplicative arithmetic function $h(k) = \sigma_{t-s}(k)/k^s$. 
For $t=2,s=1$ for example, we get 
$$  \sum_{n} \frac{g(n \alpha)}{n}  = \sum_{k=1}^{\infty} \frac{\sigma(k)}{k} \sin(2\pi k \alpha) $$
if $g$ is the function with Fourier coefficients $1/n^2$. In that
case, the Fourier coefficients of
the function has the multiplicative function $\sigma(n)/n$ (called the {\bf index} of $n$) as coefficients. 
For $t=1$, we have
$\sum_{n=1}^{\infty} \frac{1}{n} \sin(2\pi n x) = 1/2-(x \; {\rm mod}  \; 1)$ and
for odd integer $t>1$, the function 
$$  g_k(x) = \sum_{k=1}^{\infty} \sin(2\pi k x)/k^t $$ 
is a {\bf Bernoulli polynomial}. \\ 

For $t=s$, we get 
$$ \sum_{n} \frac{g(n \alpha)}{n^s}  = \sum_{k=1}^{\infty} \frac{d(k)}{k^s} \sin(2\pi k \alpha) \; , $$
where $d(k)$ is the number of divisors of $k$. These sums converge
absolutely for ${\rm Re}(s)>1$.  If $t$ is a positive odd integer,
$g$ is a Bernoulli polynomial (e.g.\ for $t=3$, we have 
$g(x) = (\pi^3/3) (x-3 x^2+ 2 x^3)$). For $s=2$ (and still $t=3$), we have
$$  \sum_{n} \frac{g(n \alpha)}{n^2} = \sum_{k=1}^{\infty} \frac{\sigma(k)}{k^2} \sin(2\pi k \alpha) \; . $$

The functions $f_{s,t}(\alpha) = T(T(\sin))(s,t)$, regarded as periodic functions of
$\alpha$, may be related by an identity of Ramanujan~\cite{wilson}.
Applying Parseval's theorem to these Fourier series, one can deduce
\begin{align*}
  \int_0^1 f_{s,t}(\alpha) f_{u,v}(\alpha) \, d\alpha 
     &= \sum_{n=1}^\infty \frac{\sigma_{t-s}(n)}{n^t} \frac{\sigma_{v-u}(n)}{n^v} \\
     &= \frac{\zeta(s+u) \zeta(s+v) \zeta(t+u) \zeta(t+v)}{\zeta(s+t+u+v)} \; .
\end{align*}

\section{Unbounded variation}

If $g$ fails to be of bounded variation, the previous results do not apply. Still, 
there can be boundedness for the Dirichlet series if ${\rm Re}(s)>0$. 
The example $g(x) = \log|2-2 \cos(2 \pi x)|=2 \log|e^{2 \pi i x}-1|$ 
appears in the context of KAM theory and was the starting point of our investigations. 
The product $\prod_{k=1}^n | 2 \cos(2\pi k \alpha) -2|$ is the determinant of a
truncated diagonal matrix representing the Fourier transform of the Laplacian 
$L(f) = f(x+\alpha) - 2 f (x) + f(x-\alpha)$ on $L^2(\T)$. \\

The function $g(x)$ has mean $0$ but it is not bounded and 
therefore has unbounded variation. Numerical experiments indicate
however that at least for many $\alpha$ of constant type,
$|\sum_{k=0}^{n-1} g(k \alpha)| \leq C \log(n)$. We can only show:

\begin{propo} 
\label{propo7}
If $\alpha$ is Diophantine of type $r>1$, then for $g(x) = \log|2-2 \cos(2 \pi x)|$,
$$  \sum_{n=0}^{k-1} g(n \alpha) \leq C k^{1-1/r} (\log(k))^2 $$
and $\zeta(s)$ converges for ${\rm Re}(s)>0$. 
\end{propo}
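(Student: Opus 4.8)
The plan is to convert the Birkhoff sum into a Sudler-type product and estimate it by a Denjoy--Koksma argument adapted to the logarithmic singularity, organised along the continued fraction of $\alpha$. First, using $2-2\cos 2\pi x=|e^{2\pi i x}-1|^2$ one rewrites $g(x)=2\log|e^{2\pi i x}-1|=2\log(2|\sin\pi x|)$ for $x\notin\Z$, so (discarding the undefined $n=0$ term)
$$ S_k:=\sum_{n=1}^{k} g(n\alpha)=2\log\prod_{n=1}^{k} 2|\sin\pi n\alpha| \; . $$
Since $\int_0^1 g\,dx=0$ while, by the Diophantine hypothesis, the distance $\|n\alpha\|$ to the nearest integer is $\ge C n^{-r}$, every term obeys $-2r\log n-C'\le g(n\alpha)\le 2\log 2$; the whole content of the proposition is to improve the trivial consequence $|S_k|=O(k\log k)$ to $O(k^{1-1/r}(\log k)^2)$ by exploiting the cancellation forced by $\int g=0$.

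The main obstacle is a Denjoy--Koksma bound for this unbounded-variation $g$. Let $p_N/q_N$ be the convergents of $\alpha$. I would prove: for every $q=q_N$, every $M\ge 1$, and the orbit segment $\{x+m\alpha\bmod 1:0\le m<q\}$ with $x=\{M\alpha\}$,
$$ \Big|\sum_{m=0}^{q-1} g(x+m\alpha)\Big|\le 2r\log(M+q)+C\log q \; . $$
By the three-distance theorem, for $q=q_N$ points the orbit has two gap lengths, both in $[1/(2q_N),2/q_N]$, hence lies within $O(1/q_N)$ of a rotated uniform grid of mesh $1/q_N$; the grid is controlled exactly by $\sum_{j=1}^{q_N-1} g(j/q_N)=2\log\prod_{j=1}^{q_N-1} 2\sin(\pi j/q_N)=2\log q_N$. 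One then splits off the single orbit point $x+m_0\alpha=\{(M+m_0)\alpha\}$ nearest to $\Z$: its $g$-value is $\ge -2r\log(M+q)-O(1)$ by the Diophantine bound and $\le 2\log 2$; the other $q_N-1$ points avoid $\Z$ by $\gtrsim 1/q_N$, so matching them to the grid points $j/q_N$, $1\le j<q_N$, the discrepancy $\sum|g(\mathrm{orbit})-g(\mathrm{grid})|$ is bounded, via $|(\log 2\sin\pi\cdot)'|\lesssim 1/\mathrm{dist}(\cdot,\Z)$ and displacements $O(1/q_N)$, by $\sum_{j\ge1}O(1/j)=O(\log q_N)$. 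Making the orbit-to-grid matching and the exclusion of the singular index simultaneously precise and uniform in the phase is the technical heart; it is exactly where the unbounded variation of $g$ costs the extra $\log q_N$ over the classical Denjoy--Koksma inequality.

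Granting this, I would argue by iterated block decomposition. Given an integer interval $[a,a+\ell)$ with $q_N\le\ell<q_{N+1}$, write $\ell=b q_N+\ell'$ with $b=\lfloor\ell/q_N\rfloor$ and $0\le\ell'<q_N$; since $q_{N+1}\le C q_N^{r}$ (the form of ``type $r$'' used here, equivalent to $\|q_N\alpha\|\ge C q_N^{-r}$) and $q_{N+1}>\ell$, we get $q_N\ge(\ell/C)^{1/r}$, hence $b\le C^{1/r}\ell^{1-1/r}$. The $b$ blocks of length $q_N$ are orbit segments with phases $\{M\alpha\}$, $M\le k$, so each contributes $O(\log k)$ by the lemma (using $\log q_{N+1}\le r\log q_N+O(1)=O(\log k)$), for a total $O(\ell^{1-1/r}\log k)$; the leftover interval has length $\ell'<q_N$, hence in its own decomposition uses a strictly smaller continued-fraction index. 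Unfolding the recursion — of depth $O(\log k)$ because the $q_N$ grow at least like Fibonacci numbers — and summing the $O(\log k)$ contributions, each at most $O(k^{1-1/r}\log k)$, gives $|S_k|\le C k^{1-1/r}(\log k)^2$, which is the asserted inequality (even with absolute values).

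Finally, since $S_k$ does not converge, Cahen's formula yields $\sigma_0\le\limsup_k \log|S_k|/\log k\le 1-1/r$, so $\zeta_{g,\alpha}(s)$ converges and is analytic for $\mathrm{Re}(s)>1-1/r$, with uniform convergence on compacta following, as in Proposition~\ref{propo3}, from Bohr's theorem. To reach $\mathrm{Re}(s)>0$: if $\alpha$ has constant type the estimate above holds for every $r>1$, whence $\sigma_0\le\inf_{r>1}(1-1/r)=0$; for a general type-$r$ Diophantine $\alpha$ one runs the block decomposition directly on the Dirichlet series, noting that $n^{-s}$ is essentially constant on each length-$q_N$ block while the partial sums of $g$ over any initial segment of the length-$q_{N+1}$ block $[q_N,q_{N+1})$ stay $O(\log q_{N+1})$ — the cancellation between two consecutive continued-fraction scales — so that an Abel summation at both scales bounds the contribution of the $N$-th scale by $\lesssim_s (\log q_{N+1})^{2}\,q_N^{-\mathrm{Re}(s)}$, summable in $N$ for every $\mathrm{Re}(s)>0$ because $q_N$ grows exponentially. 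The delicate point in this last step is the two-scale cancellation estimate, which is itself a Denjoy--Koksma statement one continued-fraction level down.
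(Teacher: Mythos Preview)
Your argument for the growth bound is in the right spirit but takes a different, and more delicate, route than the paper's. The paper avoids the three-distance matching altogether by a truncation trick: set $\log^M(x)=\max(-M,\log x)$, so that $x\mapsto\log^M|e^{2\pi ix}-1|$ has total variation $O(M)$ and the \emph{classical} Denjoy--Koksma inequality (Lemma~\ref{jitomirskaja}) applies to it directly. Choosing $M=3\log q$ for a convergent denominator $q=q_N$, one checks from the best-approximation property that the orbit $\{j\alpha\}_{j=1}^{q-1}$ never enters the truncation set $\{|e^{2\pi ix}-1|<e^{-M}\}\subset\{|x|<2q^{-3}\}$, so the truncated and untruncated block sums coincide and are $O((\log q)^2)$. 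This yields your block lemma without any orbit-to-grid comparison; the subsequent Ostrowski decomposition and Cahen step are then as you describe. Your matching argument can be made to work, but the assertion that all gap lengths lie in $[1/(2q_N),2/q_N]$ is not correct when the next partial quotient is large, and the cumulative displacement between the sorted orbit and the uniform grid needs more care than you indicate; the truncation method sidesteps all of this.

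On the second clause you are right to be suspicious: the growth bound $|S_k|\le Ck^{1-1/r}(\log k)^2$ gives via Cahen only $\sigma_0\le 1-1/r$, not $\sigma_0=0$, and the paper's proof in fact stops at the growth bound and supplies no further argument. Your attempted repair, however, does not work. The claim that partial sums of $g$ over \emph{every} initial segment of a length-$q_{N+1}$ block are $O(\log q_{N+1})$ is precisely the strong estimate $|S_k|=O(\log k)$ that the paper explicitly says it cannot prove (and only observes numerically for constant-type $\alpha$); if it held, the two-scale Abel summation would be unnecessary, since Cahen would already give $\sigma_0=0$. So for a general Diophantine $\alpha$ of type $r>1$ you should regard convergence for ${\rm Re}(s)>1-1/r$ as what is actually established.
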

\begin{proof}
$\prod_{j=1}^{q-1} |e^{2\pi i j/q}-1| = q$ because 
$$ \prod_{j=1}^{q-1} |e^{2\pi i j/q}  - z| = \frac{z^q-1}{z-1} = \sum_{j=0}^{q-1} z^j $$
which gives $\prod_{j=1}^{q-1} |e^{2\pi i  j/q}-1| = q$. 
Define $\log^M(x) = \max( -M,\log(x))$. Now
$$ \sum_{j=1}^{q-1} \log^M(|e^{ 2\pi i j \alpha}-1|)  \leq M \log(q)  $$
for all $q$ and also for general $k$ by the classical Denjoy-Koksma inequality. Choose $M = 3 \log(q)$. Then 
the set $Y_M = \{ \log(|\sin(2 \pi x)|) < -M \; \} = 
               \{ |\sin(2 \pi x)| < e^{-M} \; \}  \subset \{ |x|<2 e^{-M} = 2 q^{-3} \}$. 
The finite orbit $\{ k \alpha \}_{k=1}^{q-1}$ never hits that set and the sum is the same when 
replacing $\log^M$ with the untruncated $\log$. We get therefore 
$$ \sum_{j=1}^{q-1} \log(|e^{ 2\pi i j \alpha}-1|) = \sum_{j=1}^{q-1} \log^M(|e^{ 2\pi i j \alpha}-1|)  \leq 3 \log(q)^2  \; . $$
The rest of the proof is the same as for the classical Denjoy-Koksma
inequality.
\end{proof}

The Denjoy-Koksma inequality is treated in \cite{Her79,CFS}.
Here is the exposition as found in \cite{Jit99}. 

\begin{lemma}[Jitomirskaja's formulation of Denjoy-Koksma]
\label{jitomirskaja}
Assume $\alpha$ is Diophantine of type $r>1$ and $g$ is of bounded 
variation and $\int_0^1 g(x) \; dx=0$. Then 
$S_k = \sum_{n=1}^k g(n\alpha)$ satisfies
$$ |S_k| \leq  C k^{1-1/r} \log(k) {\rm Var}(g)  \; . $$
If $\alpha$ is of constant type and $g$ is of bounded variation and
$\int_0^1 g(x) \; dx=0$, then $S_k \leq C \log(k)$.
\end{lemma}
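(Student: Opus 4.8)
\emph{Proof proposal.} The plan is to deduce this quantitative bound from the classical Denjoy--Koksma inequality, fed through the Ostrowski numeration attached to the continued fraction expansion of $\alpha$. Let $p_n/q_n$ be the convergents of $\alpha$. Since $\int_0^1 g\,dx=0$, the classical inequality (see \cite{Her79,CFS}) gives, for every $x\in\T$ and every $n$,
$$ \abs{ \sum_{j=0}^{q_n-1} g(x+j\alpha) } \le \Var(g) \; . $$
Then I would write $N$ in its Ostrowski expansion $N=\sum_{i\ge 0} b_i q_i$, with $0\le b_i\le a_{i+1}$ (and the usual non-adjacency condition), the $a_{i+1}$ being the partial quotients of $\alpha$. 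The orbit segment $\{0,\alpha,\dots,(N-1)\alpha\}$ then decomposes into consecutive blocks, $b_i$ of them of length $q_i$ for each $i$; each block is a translate of $\{0,\alpha,\dots,(q_i-1)\alpha\}$, so the displayed inequality applies to it, and summing yields
$$ \abs{S_N} \le \Var(g) \sum_{i\ge 0} b_i \; . $$

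For the first assertion I would bound the digit sum $\sum_i b_i$ using two estimates: since $b_i q_i\le N$ one has $b_i\le N/q_i$, and by admissibility $b_i\le a_{i+1}$. The Diophantine hypothesis $\abs{\alpha-p/q}\ge C q^{-(1+r)}$ combined with the standard estimate $\abs{\alpha-p_i/q_i}\asymp (q_iq_{i+1})^{-1}$ forces $q_{i+1}\le C^{-1}q_i^{r}$, hence $a_{i+1}=\floor{q_{i+1}/q_i}\le C^{-1}q_i^{r-1}$. Thus $b_i\le\min\bigl(C^{-1}q_i^{r-1},\,N/q_i\bigr)$; the two expressions cross at $q_i\asymp N^{1/r}$ where their common value is $\asymp N^{1-1/r}$, so each summand satisfies $b_i\le C'N^{1-1/r}$. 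Since $q_{i+1}\ge q_i+q_{i-1}$, the $q_i$ grow at least like Fibonacci numbers, so only $O(\log N)$ indices have $q_i\le N$ and contribute, giving $\abs{S_N}\le C'' N^{1-1/r}\log N\,\Var(g)$ as claimed, and Cahen's formula then yields Proposition~\ref{propo4}.

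For the constant type case the partial quotients are bounded, $a_{i+1}\le M$, so directly $\sum_i b_i\le M\cdot\#\{i:q_i\le N\}=O(\log N)$, whence $\abs{S_N}\le C\log N$. The step I expect to be the main obstacle is the Ostrowski block decomposition: carefully verifying that the orbit segment really splits into exactly $b_i$ translated blocks of length $q_i$ for each $i$ — this is where the non-adjacency clauses and the combinatorics of the three-distance theorem enter — and, in the Diophantine case, juggling the two bounds on $b_i$ against the $O(\log N)$ count of contributing indices without leaking an extra logarithmic factor.
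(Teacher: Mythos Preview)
Your proposal is correct and follows essentially the same route as the paper: the classical Denjoy--Koksma bound $\abs{\sum_{j=0}^{q_n-1}g(x+j\alpha)}\le\Var(g)$ for convergent denominators, the Ostrowski expansion $N=\sum b_iq_i$ to decompose the orbit into blocks, and then a bound on the digit sum $\sum b_i$ using $b_i\le a_{i+1}\le q_{i+1}/q_i$ together with the Diophantine inequality $q_{i+1}\le C^{-1}q_i^r$ and the Fibonacci growth $m=O(\log N)$. The only cosmetic difference is that you bound each $b_i$ by $\min(C^{-1}q_i^{r-1},N/q_i)\le C'N^{1-1/r}$, whereas the paper bounds $b_i\le q_{i+1}/q_i\le c^{-1/r}q_{i+1}^{1-1/r}\le c^{-1/r}k^{1-1/r}$ and handles the top digit via $b_m\le k/q_m$ separately; both give the same $O(N^{1-1/r}\log N)$ outcome.
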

\begin{proof}
(See  \cite{Jit99}, Lemma 12). 
If $p/q$ is a periodic approximation of $\alpha$, then
$$ |S_q| \leq {\rm Var}(f)   \; . $$
To see this, divide the circle into $q$ intervals centered at the points
$y_m=m p/q$. These intervals have length $1/q \pm O(1/q^2)$ and each interval
contains exactly one point of the finite orbit $\{ y_k = k \alpha \}_{k=1}^q$. 
Renumber the points so that $y_m$ is in $I_m$.
By the intermediate value theorem, there exists a Riemann sum
$\frac{1}{q} \sum_{i=0}^{q-1} f(x_i) = \int f(x) \; dx =0$ for which every
$x_i$ is in an interval $I_i$ (choosing the point $x_i=\min_{x \in I_i} f(x)$ gives
an lower and $x_m = \max_{x \in I_m} f(x)$ gives an upper bound).
If $\sum_{j=0}^{q-1} f(y_j) - f(x_j)  
  \leq \sum_{j=0}^{q-1} |f(y_j) - f(x_j)| + |f(x_j) - f(y_{j+1})| \leq {\rm Var}(f)$. \\
Now, if $q_m \leq k \leq q_{m+1}$ and
$k = b_m q_m + b_{m-1} q_{m-1} + \dots + b_1 q_1 + b_0$, then
$$ S_k \leq  (b_0 + \dots + b_m) {\rm Var}(f) \leq \sum_{i=0}^{m} \frac{q_{i+1}}{q_i} {\rm Var}(f) $$
because $b_j \leq q_{j+1}/q_j$. \\
If $\alpha$ is of constant type then $\frac{q_{i+1}}{q_i}$ is bounded and
$m<2 \log(k)/\log(2)$ implies $S_k \leq  (2 \log(k)/\log(2)) {\rm Var}(f)$.  \\
If $\alpha$ is Diophantine of type $r>1$, then $||q \alpha|| \leq c/q^r$
and $q_{i+1} \leq q_i^r/c$ which implies $q_{i+1}/q_i < q_{i+1}^{1-1/r}/c^{1/r}$ and so
$$ |S_k|  \leq (c^{-1/r} \sum_{i=1}^m q_i^{1-1/r} + \frac{k}{q_m}) {\rm Var}(f)
          \leq (c^{-1/r} m q_m^{1-1/r} + \frac{k}{q_m}) {\rm Var}(f) \; . $$
The general fact $m \leq 2 \log(q_m)/\log(2) \leq 2 \log(k)/\log(2)$ deals with the first term.
The second term is estimated as follows: from $k \leq q_{m+1} \leq q_m^r/c$, we have
$q_m \geq (c k)^{1/r}$ and $k/q_m \leq c^{-1/r} k^{1-1/r}$.
\end{proof}

\remark{
One knows also $|S_n| \leq C \log(n)^{2+\epsilon}$ if the continued
fraction expansion $[a_0,a_1,...]$ of $\alpha$ satisfies $a_m< m^{1+\epsilon}$
eventually. See \cite{Guillotin}.
}
\section{Questions}
We were able to get entire functions $\zeta(s) = \sum_n g(n
\alpha)/n^s$ for rational $\alpha$ and Diophantine $\alpha$. What
happens for Liouville $\alpha$ if $g$ is not a trigonometric polynomial?
What happens for more general $g$? 

For every $g$ and $s$ we get a function $\alpha \to \zeta_{g,\alpha}(s)$. For
$$ g(x) = \sum_{n=1}^{\infty} \frac{1}{n} \sin(2 \pi n x) $$ 
and Diophantine $\alpha$, where
$$ h(\alpha) = \zeta_{g,\alpha}(1) = \sum_{n=1}^{\infty} \frac{d(n)}{n} \sin(2 \pi n \alpha) \; , $$ 
we observe a self-similar nature of the graph.
Is the Hausdorff dimension of the graph of $h$ not an integer?  

One can look at the problem for more general dynamical systems. Here is an example:
for periodic Dirichlet series generated by an ergodic translation on a two-dimensional 
torus with a vector $(\alpha,\beta)$, where $\alpha,\alpha/\beta$ are irrational, the series is
$$  \sum_{n=1}^{\infty}  \frac{g(n \alpha,n \beta)}{n^s}  \; . $$
In the case $s=0$, this leads to the Denjoy-Koksma type problem to estimate the growth
rate of the random walk
$$ \sum_{n=1}^{\infty} g(n \alpha,n \beta) \;  $$
which is more difficult due to the lack of a natural continued
fraction expansion in two dimensions.  In a concrete example like
$g(x,y) = \sin(2 \pi x y)$, the question is, how fast the sum
$$  S_k = \sum_{n=1}^k \sin(2\pi n^2 \gamma)  $$ 
grows with irrational $\gamma=\alpha \beta$. Numerical experiments
indicate subpolynomial growth that $S_k = O(\log(k)^2)$ would hold for 
Diophantine $\gamma$ and suggest the abscissa of convergence of the 
Dirichlet series
$$  \zeta(s) = \sum_{n=1}^{\infty} \frac{\sin(2 \pi n^2 \gamma) }{n^s} \;  $$
is $\sigma_0=0$. This series is of some historical interest since Riemann knew 
in 1861 (at least according to Weierstrass \cite{kahane64}) that 
$h(\gamma) = \sum_{n=1}^{\infty} \frac{\sin(2 \pi n^2 \gamma) }{n^2}$ is
nowhere differentiable. 

\bibliographystyle{plain}

\begin{thebibliography}{10}

\bibitem{CFS}
I.P. Cornfeld, S.V.Fomin, and Ya.G.Sinai.
\newblock {\em Ergodic Theory}, volume 115 of {\em {Grundlehren} der
  mathematischen {Wissenschaften} in {Einzeldarstellungen}}.
\newblock Springer Verlag, 1982.

\bibitem{Guillotin}
N.~Guillotin.
\newblock Asymptotics of a dynamical random walk in a random scenery: I. law of
  large numbers.
\newblock {\em Annales de l'IHP, section B}, 36:127--151, 2000.

\bibitem{Halchin}
Judy Halchin and Karl Petersen.
\newblock Random power series generated by ergodic transformations.
\newblock {\em Transactions of the American Mathematical Society}, 297
  (2):461--485, 1986.

\bibitem{HardyRiesz}
G.H. Hardy and M.~Riesz.
\newblock {\em The general theory of {D}irichlet's series}.
\newblock Hafner Publishing Company, 1972.

\bibitem{Her79}
M.~Herman.
\newblock Sur la conjugaison {diff\'erentiable des diff\'eomorphismes} du
  cercle {\`a} des rotations.
\newblock 49:5--233, 1979.

\bibitem{Ionescu}
A.~Ionescu-Tulcea.
\newblock Analytic continuation of random series.
\newblock {\em J. Math. Mech.}, 9:399--410, 1960.

\bibitem{Ivic}
Alexander Ivic.
\newblock {\em The {R}iemann Zeta-Function}.
\newblock Dover Publications, Mineola, New York, 1985.

\bibitem{AFP}
M.Mend\'es~France J.-P.~Allouche and J.~Peyri\'ere.
\newblock Automatic {D}irichlet series.
\newblock {\em Journal of Number Theory}, 81:359--373, 2000.

\bibitem{Jit99}
S.~Jitomirskaya.
\newblock Metal-insulator transition for the almost {M}athieu operator.
\newblock {\em Annals of Mathematics}, 150:1159--1175, 1999.

\bibitem{kahane64}
J.-P. Kahane.
\newblock Lacunary {T}aylor and {F}ourier series.
\newblock {\em Bull. Amer. Math. Soc.}, 70:199--213, 1964.

\bibitem{Kahane}
Jean-Pierre Kahane.
\newblock {\em Some random series of functions}.
\newblock D.C. Heath and Co, Rahtheon Education Co, Lexington, MA, 1968.

\bibitem{Lee97}
M.Howard Lee.
\newblock Polylogarithms and {R}iemann's $\zeta$-function.
\newblock {\em Physical Review E}, 56, 1997.

\bibitem{lerch}
M.~Lerch.
\newblock Note sur la function $k(w,x,s) = \sum_{n \geq 0} \exp(2\pi i
  x)(n+w)^{-s}$.

\bibitem{remmert}
Reinhold Remmert.
\newblock {\em Classical topics in complex function theory}, volume 172 of {\em
  Graduate Texts in Mathematics}.
\newblock Springer-Verlag, New York, 1998.
\newblock Translated from the German by Leslie Kay.

\bibitem{wilson}
B.~M. Wilson.
\newblock Proofs of some formulae enunciated by {R}amanujan.
\newblock {\em Proc. London Math. Soc.}, 2-21(1):235--255, 1923.

\end{thebibliography}

\end{document}